\def \B {\hat{B}}
\def \Ch {\hat{C}}
\def \C {{\mathcal{C}}}
\def \d {\hat{d}}
\def \de {\partial}
\def \G {\mathrm{G}}
\def \H {\mathcal{H}}
\def \HH {\textsc{{H}} }
\def \N {\mathbb{N}}
\def \O {\Omega}
\def \phi {\varphi}
\def \RN {\mathbb{R}^N}
\def \R {\mathbb{R}}
\def \l {\lambda}
\def \lgM {\log{(M)}}
\def \Dhk {D_k^h(z_0,\l)}
\def \Ohk {\O_k^h(z_0,\l)}
\def \Fhk {F_k^h}
\def \W {\mathcal{W}}
\def \qz {[z]}
\def \Orol {\O_\l^\rho}
\def \erolt {E_\l(\rho,\tau)}
\def \mrolt {m_\l(\rho,\tau)}
\newtheorem{theorem}{Theorem}[section]
\newtheorem{lemma}[theorem]{Lemma}
\newtheorem{proposition}[theorem]{Proposition}
\newtheorem{corollary}[theorem]{Corollary}
\newtheorem{remark}[theorem]{Remark}
\theoremstyle{definition}
\newtheorem{definition}[theorem]{Definition}
\def\stepone{\noindent{\it Step I. }}
\def\steponep{\noindent{\it Step $I^*$. }}
\def\steptwo{\noindent{\it Step II. }}
\def\steptwop{\noindent{\it Step $II^*$. }}
\def\stepthree{\noindent{\it Step III. }}
\def\stepfour{\noindent{\it Step IV. }}
\numberwithin{equation}{section}
\begin{document}

\title{Wiener-type tests from a two-sided Gaussian bound}

\author[E. Lanconelli]{Ermanno Lanconelli}
\address{Dipartimento di Matematica,
         Universit\`{a} degli Studi di Bologna,
         Piazza di Porta S. Donato, 5 - 40126 Bologna, Italy.
         }
 \email{ermanno.lanconelli@unibo.it}
\author[G. Tralli]{Giulio Tralli}
\address{Dipartimento di Matematica,
         Universit\`{a} degli Studi di Bologna,
         Piazza di Porta S. Donato, 5 - 40126 Bologna, Italy.
         }
 \email{giulio.tralli2@unibo.it}
\author[F. Uguzzoni]{Francesco Uguzzoni}
\address{Dipartimento di Matematica,
         Universit\`{a} degli Studi di Bologna,
         Piazza di Porta S. Donato, 5 - 40126 Bologna, Italy.        
         }
 \email{francesco.uguzzoni@unibo.it}

%\subjclass[2010]{35H10, 35K65, 31E05, 35H20, 31C15.}
%\keywords{Gaussian bounds, potential analysis, boundary behavior of PW-solutions, nondivergence H\"ormander operators, Wiener criterion.}

\date{}

\begin{abstract}
In this paper we are concerned with hypoelliptic diffusion operators $\H$. Our main aim is to show, with an axiomatic approach, that a Wiener-type test of $\H$-regularity of boundary points can be derived starting from the following basic assumptions: Gaussian bounds of the fundamental solution of $\H$ with respect to a distance satisfying doubling condition and segment property. As a main step towards this result, we establish some estimates at the boundary of the continuity modulus for the generalized Perron-Wiener solution to the relevant Dirichlet problem. The estimates involve Wiener-type series, with the capacities modeled on the Gaussian bounds. We finally prove boundary H\"older estimates of the solution under a suitable exterior cone-condition.
\end{abstract}
\maketitle

\section{Introduction}\label{intro}
Let us consider the following linear second order Partial Differential Operators
\begin{equation}\label{IHG}
\H=\sum_{i,j=1}^N q_{i,j}(z)\de^2_{x_i,x_j} +\sum_{k=1}^N q_{k}(z)\de_{x_k}
-\partial_t ,
 \end{equation}
in the strip of $\R^{N+1}$
$$ S=\{ z=(x,t)\, :\, x\in\RN,\ T_1<t<T_2\},\quad -\infty\le T_1<T_2\le \infty.$$
We assume the coefficients $q_{i,j}=q_{j,i}, q_k$
of class $C^\infty$, and the characteristic form
$$q_\H(z,\xi)=\sum_{i,j=1}^N q_{i,j}(z)\xi_i\xi_j,\quad \xi=(\xi_1,\ldots,\xi_N)\in\RN,$$
nonnegative definite and not totally degenerate, i.e.,
$q_\H(z,\cdot)\ge 0,\ q_\H(z,\cdot)\not\equiv 0$ for every $z\in S$. We also assume the {\em hypoellipticity} of $\H$ and of its adjoint $\H^*$, and the existence of a global {\em fundamental solution}
$$(z,\zeta )\mapsto \Gamma (z,\zeta )$$
smooth out of the diagonal of $S\times S$.

In \cite{LU} a deep Potential Analysis for $\H$ has been developed {\it only} assuming a {\it two sided Gaussian-type estimate} for $\Gamma$. Such analysis was mainly aimed to obtain regularity criteria and uniform boundary estimates for the Perron-Wiener-Brelot-Bauer (PWBB, in short) solution to the Dirichlet problem for $\H$ in terms of suitable series involving balayage potentials. Under the same assumptions, our objective here is to prove Wiener-type tests for $\H$ and to estimate the continuity modulus at the boundary of the PWBB-solution in terms of Wiener-type series, i.e. series involving $\H$-capacity of {\it ring-shaped} sets of $\Gamma$.

Before we state our main results we want to give a precise description of our assumptions and to recall some notations and results from \cite{LU}. First of all, when we say that $\Gamma$ is a fundamental solution for $\H$ we mean
\begin{enumerate}
\item[(i)] $\Gamma (\cdot,\zeta )\in  L^1_{\rm {loc}}(S)$ and $\H(\Gamma (\cdot,\zeta ))=-\delta _\zeta$, the Dirac measure at
 $\{\zeta\}$, for every $\zeta \in S$;
\item[(ii)] for every compactly supported continuous function $\phi$ on $\RN$ and for every $x_0\in\RN$, $\tau\in]T_1,T_2[$, we have
\begin{equation}\label{H1}
\int_{\RN} \Gamma(x,t,\xi,\tau)\,\varphi(\xi)\,d\xi\to\phi(x_0),\qquad\text{as $(x,t)\to(x_0,\tau)$, $t>\tau$.}
\end{equation}
\end{enumerate}
\noindent
Given a metric $d:\R^N\times\R^N\rightarrow\R$, we call $d$-Gaussian (of exponent $a>0$) any function
$$\G^{(d)}_a(z,\zeta)=\G^{(d)}_a(x,t,\xi,\tau)=
\begin{cases}
0 & \text{ if }t\le\tau,\\
\frac{1}{|B_d(x,\sqrt{t-\tau})|} \exp\left(-a \frac{d(x,\xi)^2}{t-\tau}\right) &  \text{ if }t>\tau.
\end{cases}
$$
Hereafter, if $A\subseteq\RN$ ($A\subseteq\mathbb{R}^{N+1}$), $|A|$ denotes the $N$-dimensional ($(N+1)$-dimensional) Lebesgue measure of $A$. Moreover, we denote the $d$-ball of center $x$ and radius $r>0$ as
$$B_d(x,r)=B(x,r)=\{y\in\RN\,:\,d(x,y)<r\}.$$
Then, our crucial \emph{axiomatic} assumption is the existence of a distance $d$ in $\R^N$ such that the following Gaussian estimates for $\Gamma$ hold
\vskip 0.35cm
\begin{enumerate}\label{bounds}
\item[(H)] $\qquad\qquad\qquad\frac{1}{\Lambda} \G^{(d)}_{b_0}(z,\zeta)\le \Gamma(z,\zeta)\le\Lambda\G^{(d)}_{a_0}(z,\zeta),\quad \forall z,\zeta\in S,$
\end{enumerate}\vskip 0.35cm
for suitable positive constants $a_0$, $b_0$, and $\Lambda$. Throughout the paper we keep such a distance $d$ fixed, and we will simply write $\G_a$ instead of $\G^{(d)}_a$. We shall make the following assumptions on the metric space ($\R^N, d$):
\begin{enumerate}
\item[(D1)]\label{diuno} The $d$-topology is the Euclidean topology. Moreover $(\RN,d)$ is complete and, for every fixed $x\in\RN$, $d(x,\xi)\to\infty$ if (and only if) $\xi\to\infty$ with respect to the usual Euclidean norm.
\item[(D2)]\label{didue} $(\RN,d)$ is a {\em doubling metric space} w.r.t. the Lebesgue measure, i.e. there exists a constant $c_d>1$ such that
$$ |B(x,2r)|\le c_d |B(x,r)|, \quad \forall x\in\RN,\ \forall r>0.$$
We will always denote by $Q=\log_2{c_d}$ the relative homogeneous dimension.
\item[(D3)]\label{ditre} $(\RN,d)$ has the {\em segment property}, i.e., for every $x,y\in\RN$ there exists a continuous path $\gamma: [0,1]\to\RN$ such that $\gamma(0)=x$, $\gamma(1)=y$ and
$$d(x,y)=d(x,\gamma(t))+d(\gamma(t),y)\quad\forall t\in [0,1].$$
\end{enumerate}
Given an operator $\H$ satisfying (\hyperref[bounds]{H}) w.r.t. a metric $d$ verifying  (\hyperref[diuno]{D1})--(\hyperref[ditre]{D3}), we set
\begin{equation}\label{dipacca}
|\H|=\Lambda+a_0^{-1}+b_0+c_d.
\end{equation}
The operator $\H$ endows the strip $S$ with a structure of $\beta$-harmonic space satisfying the Doob convergence property, see \cite[Theorem 3.9]{LU}. As a consequence, for any bounded open set $\O$ with $\overline{\O}\subseteq S$, the Dirichlet problem
$$\begin{cases}
\H u= 0 \text{ in }\Omega,  \\
u|_{\de\Omega}=\phi
\end{cases}$$
has a generalized solution $H_\phi^\Omega$, in the Perron-Wiener sense, for every continuous function $\phi:\de\Omega\rightarrow\R$. A point $z_0\in\de\O$ is called $\H$-regular if $\lim_{z\rightarrow z_0}{H_\phi^\Omega(z)}=\phi(z_0)$ for every $\phi\in C(\de\O)$. The main result of this paper is the following Wiener-type test for the $\H$-regularity of the boundary points of $\O$.

\begin{theorem}\label{mmmain} Let $z_0=(x_0,t_0)\in\de\O$, and $\l\in]0,1[$.
\begin{itemize}
\item[(i)] If there exists $0<a\leq a_0$ and $b>b_0$ such that
\begin{equation}\label{anonb}
\sum_{h,k=1}^{+\infty}{\frac{\C_a\left(\Ohk\right)}{\left|B\left(x_0,\sqrt{\l^k}\right)\right|}\l^{bh}}=+\infty
\end{equation}
then the point $z_0$ is $\H$-regular.
\item[(ii)] If the point $z_0$ is $\H$-regular, then
\begin{equation}\label{bnona}
\sum_{h,k=1}^{+\infty}{\frac{\C_b\left(\Ohk\right)}{\left|B\left(x_0,\sqrt{\l^k}\right)\right|}\l^{ah}}=+\infty
\end{equation}
for every $b\geq b_0$ and $0<a\leq a_0$.
\end{itemize}
\end{theorem}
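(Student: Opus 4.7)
The plan is to derive Theorem \ref{mmmain} from the balayage-type regularity criterion already proved in \cite{LU}, by converting that criterion into capacitary form through the two-sided bound (H). The asymmetry between (i) and (ii) -- strict $b>b_0$ versus $b\ge b_0$ -- will emerge naturally from the conversion.

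\textbf{Step I.} I would first recall from \cite{LU} the intrinsic regularity criterion: $z_0\in\de\O$ is $\H$-regular if and only if
\[\sum_{h,k=1}^{+\infty}\pi_{h,k}(z_0)=+\infty,\]
where $\pi_{h,k}(z_0)$ denotes the value at $z_0$ of the $\H$-balayage (reduced function of a normalized potential) against the ring $\Ohk$. By the standard representation of reduced functions,
\[\pi_{h,k}(z_0)=\int_{\Ohk}\Gamma(z_0,z)\,d\mu_{\Ohk}(z),\]
where $\mu_{\Ohk}$ is the $\H$-equilibrium measure of $\Ohk$ and $\mu_{\Ohk}(\Ohk)$ coincides with the $\H$-capacity of $\Ohk$.

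\textbf{Step II.} The core step is a two-sided sandwich between $\pi_{h,k}(z_0)$ and the Gaussian capacities $\C_a,\C_b$:
\[c\,\frac{\l^{bh}}{\left|B(x_0,\sqrt{\l^k})\right|}\,\C_a(\Ohk)\;\le\;\pi_{h,k}(z_0)\;\le\;C\,\frac{\l^{ah}}{\left|B(x_0,\sqrt{\l^k})\right|}\,\C_b(\Ohk),\]
with the lower inequality valid for $a\le a_0$, $b>b_0$, and the upper inequality for $a\le a_0$, $b\ge b_0$. The derivation uses: the pointwise bracket of $\Gamma(z_0,\cdot)$ on the ring $\Ohk$ coming from (H); the monotonicity of capacities in their kernel, which together with (H) relates the $\H$-capacity to the Gaussian capacities $\C_a$ and $\C_b$; the doubling property (D2) to control ratios of ball volumes uniformly in $k$; and the segment property (D3) to localize equilibrium potentials via Harnack-type chains.

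\textbf{Step III.} Combine Step I with the sandwich. Divergence of \eqref{anonb} forces, through the lower inequality, divergence of $\sum\pi_{h,k}(z_0)$, hence $\H$-regularity of $z_0$, proving (i). Conversely, if $z_0$ is $\H$-regular then $\sum\pi_{h,k}(z_0)=+\infty$, and the upper inequality then forces \eqref{bnona} to diverge for every admissible pair $(a,b)$, proving (ii).

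The hard part is Step II, and in particular the infinitesimal loss $b_0\leadsto b>b_0$ on the lower side. It reflects the fact that the multiplicative constant $\Lambda$ in (H) must be absorbed into a geometric factor $\l^{(b-b_0)h}$ summable in $h$; this absorption succeeds only when $b>b_0$ strictly. On the upper side no such loss is needed, since the analogous constant can be swept into the absolute $C$. Making all constants depend only on $|\H|$ (cf.~\eqref{dipacca}), uniformly in $h,k$ and in the geometry of the ring, is the main technical objective of the step.
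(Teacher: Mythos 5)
Your part (ii) is essentially the paper's own argument (necessity of the divergence via \cite[Proposition 4.12]{LU}, then the upper Gaussian bound in (\hyperref[bounds]{H}) together with the comparison $\C_\H\le c\,\C_b$ for $b\ge b_0$ and the doubling property), and that half of the proposal is sound. The genuine gap is in Step I for part (i): the ``if and only if'' criterion you invoke from \cite{LU} --- regularity of $z_0$ equivalent to $\sum_{h,k}\pi_{h,k}(z_0)=+\infty$, with $\pi_{h,k}$ the balayage potential of $\Ohk$ evaluated \emph{at the single point} $z_0$ --- is not available there. What \cite{LU} provides is only the necessity half (used for (ii)) and the characterization \eqref{charw} via the Wiener function $\W_\rho=\sum_l\rho^l(1-V_l)$: regularity is equivalent to $\W(z)\to 0$ as $z\to z_0$, a condition on the behavior of the balayage potentials \emph{near} $z_0$, not on their values at $z_0$. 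Passing from divergence of a pointwise capacitary series at $z_0$ to the decay of $\W$ near $z_0$ is exactly the difficult content of the paper: it is Theorem \ref{onedotwo}, whose proof needs the lower Gaussian bound for the Green kernels of the cylinders $C(M,r)$ (Lemma \ref{bzeroproprio}), the propagation estimate $v(z)\ge\tfrac12 v(0)$ on the shrunken cylinder $C(M,\l^p r)$ for Green-equilibrium potentials (Lemma \ref{CL1}), and the multiplicative iteration across scales yielding $1-V(z)\le\exp\bigl(-\tfrac12\sum_j v_j(0)\bigr)$ (Lemma \ref{CL2}), followed by a careful resummation in $h$ and $k$. Your Step II sandwich --- which, incidentally, is correct as a pointwise bound and even holds with $b_0$ itself on the lower side --- cannot replace this machinery: knowing that a series of potentials is large at the one point $z_0$ says nothing, by itself, about $H_\phi^\O$ or $\W$ in a punctured neighborhood of $z_0$, and the appeal to ``Harnack-type chains via (D3)'' does not supply the missing propagation step.

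A smaller but related inaccuracy: the strictness $b>b_0$ in (i) is not about absorbing the constant $\Lambda$ from (H). In the paper it arises because the number of iteration scales in Lemma \ref{CL1} grows like $p\sim p_0\,h\log(1/\l)$ (the cylinder parameter is $M=\l^{-h}$), so after inserting the lower bound \eqref{trepunti} into the iteration one must sum $\sum_h h\,\l^{(b-b_0)h}<\infty$; convergence of this series is where $b>b_0$ is genuinely used, while the constants $\Lambda$, $c_d$ are absorbed harmlessly for any $b\ge b_0$.
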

We postpone to the end of the Introduction (Subsection \ref{substo}) some comments on this theorem, along with an historical overview and explicit examples of operators to which our results apply.\\
In the above theorem we have denoted, for $h,k\in\N$,
\begin{eqnarray}\label{omhktilde}
\Ohk&=&\left\{\zeta=(\xi,\tau)\in S\smallsetminus\O\,:\,\l^{k+1}\leq t_0-\tau\leq\l^k,\vphantom{\left(\frac{1}{\l}\right)^h}\right.\\
&\,&\quad\left.\left(\frac{1}{\l}\right)^{h-1}\hspace{-0.15cm}\leq\exp{\left(\frac{d^2(x_0,\xi)}{t_0-\tau}\right)}\leq\left(\frac{1}{\l}\right)^h,\,\,\hat{d}(z_0,\zeta)\leq\sqrt{\l}\right\}.\nonumber
\end{eqnarray}

\vspace{-0.3cm}

\begin{figure}[htbp]
\begin{center}
\includegraphics{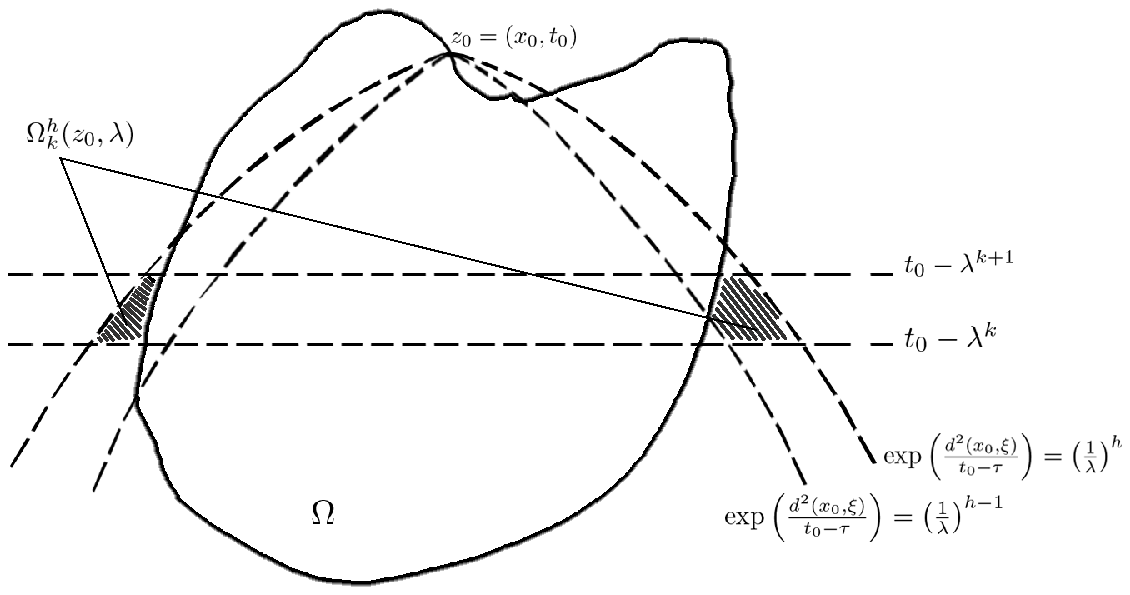}
\end{center}
\end{figure}

Moreover, for a compact set $F\subset S$, $\C_a(F)$ stands for the capacity of $F$ with respect to the $d$-Gaussian kernel of exponent $a$ (see Section \ref{cap} for the classical definition).
We have also set
$$\d(z,\zeta)=(d(x,\xi)^4+(t-\tau)^2)^{\tfrac{1}{4}},\quad  z=(x,t),\,\zeta=(\xi,\tau)\in S.$$
We shall call $\d$ the {\em parabolic counterpart} of $d$. The relative {\em parabolic balls} are
$$\B(z,r)=\{\zeta\in S\,:\, \d(z,\zeta)<r\},\quad z\in S,\ r>0.$$

As a main step in the proof of \eqref{anonb}, in Theorem \ref{onedotwo} below we establish an estimate of the continuity modulus of $H_\phi^\Omega$ at the boundary points of $\O$ in terms of Wiener-type series modeled on the $d$-Gaussian functions appearing in (\hyperref[bounds]{H}). To explain this estimate we need some more recalls from \cite{LU}. For $l\in\N$ and $\l\in]0,1[$, we denote by $V_l$ the $\H$-balayage potential of the set $\left(\B(z_0,\l^{\frac{l}{2}})\cap\{t\leq t_0\}\right)\smallsetminus\O$. We know that $0\leq V_l\leq 1$. For any $\rho\in]0,1[$, the function
\begin{equation}\label{vudoppia}
 \W=\W_{\rho}=\sum_{l=1}^\infty \rho^l(1-V_l)
\end{equation}
is what we call a {\em{$\H$-Wiener function for $\Omega$ at
 $z_0$}}. This function can be used to characterize the $\H$-regularity of the boundary point $z_0$. We have indeed (see \cite[Theorem 5.4]{LU})
\begin{equation}\label{charw}
\lim_{z\rightarrow z_0}{H_\phi^\Omega(z)}=\phi(z_0)\,\,\,\mbox{for every }\phi\in C(\de\O)\quad\mbox{if and only if }\quad \W(z)\rightarrow 0\,\,\,\mbox{as }z\rightarrow z_0.
\end{equation}
An even stronger result holds true: the continuity modulus of $H_\phi^\Omega$ at $z_0$ can be estimated only in terms of $\W$ and of the continuity modulus of the boundary data $\phi$. In fact, we have the following
\begin{equation}\label{5.9}
  |H_\phi^\Omega(z)-\phi(z_0)|\leq\tilde{\phi}(z_0,\W(z))
  \qquad\forall
   z\in\O,
 \end{equation}
where $\tilde{\phi}(z_0,s)$ is a suitable function, depending on $\phi$, which is monotone increasing in $s$ (we refer the reader to \cite[Theorem 5.2 and Remark 5.3]{LU} for the details). In this paper we show that the Wiener function $\W$, and thus $|H_\phi^\Omega(z)-\phi(z_0)|$, can be estimated in terms of Wiener-type series.
\begin{theorem}\label{onedotwo}
Let $a_0$ and $b_0$ be the positive constants in (\hyperref[bounds]{H}), and $\l\in]0,1[$. For every $0<a\leq a_0$ and $b>b_0$ there exist positive constants $C$ and $\rho_0$ (only depending on $a,b,|\H|,\l$) such that
\begin{equation}\label{wrhos}
\W_{\rho}(z)\leq C\exp{\left(-\frac{1}{C}\sum_{\N\ni k\leq \frac{\log{{\hat{d}}^2(z_0,z)}}{\log{\l}}}\sum_{h=1}^{+\infty}\frac{\C_a\left(\Ohk\right)}{\left|B\left(x_0,\sqrt{\l^k}\right)\right|}\l^{bh}\right)}\qquad\mbox{for every }z\in S,
\end{equation}
for any $0<\rho\leq\rho_0$.
\end{theorem}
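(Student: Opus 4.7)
The strategy is to realize the exponential bound \eqref{wrhos} as an iterated one-scale contraction of $\W_\rho$: at each dyadic level $k$ the Wiener function gets contracted by a factor $(1-c\eta_k)$, where $\eta_k:=\sum_{h=1}^{+\infty}\frac{\C_a(\Ohk)}{|B(x_0,\sqrt{\l^k})|}\l^{bh}$, so that iterating across $k=1,\ldots,K^\ast:=\lfloor\log\d^2(z_0,z)/\log\l\rfloor$ and applying $\log(1-x)\le-x$ gives the claimed exponential.

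\noindent\textbf{Capacity-to-potential.} Denote by $\nu_{h,k}$ the $\H$-equilibrium measure of $\Ohk$, so that $V^\H_{\Ohk}=\Gamma\nu_{h,k}$ equals $1$ q.e.\ on $\Ohk$, is bounded by $1$ on $S$, and $\nu_{h,k}(S)=:\mathrm{cap}_\H(\Ohk)$. The two-sided bound (\hyperref[bounds]{H}) gives $\mathrm{cap}_\H(\Ohk)\ge\Lambda^{-1}\C_{a_0}(\Ohk)\ge\Lambda^{-1}\C_a(\Ohk)$, using that $a\le a_0$ forces $\G_{a_0}\le\G_a$ and hence $\C_a\le\C_{a_0}$. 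Monotonicity of balayage under set inclusion gives $V_l\ge V^\H_{\Ohk}$ as soon as $\Ohk$ lies in the set defining $V_l$; the lower bound in (\hyperref[bounds]{H}) then yields
$$V_l(z)\ge V^\H_{\Ohk}(z)\ge \Lambda^{-1}\min_{\zeta\in\Ohk}\G_{b_0}(z,\zeta)\cdot \mathrm{cap}_\H(\Ohk)\ge \Lambda^{-2}\min_{\zeta\in\Ohk}\G_{b_0}(z,\zeta)\cdot \C_a(\Ohk).$$

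\noindent\textbf{Gaussian integration on the ring.} One estimates $\min_{\zeta\in\Ohk}\G_{b_0}(z,\zeta)$ from below by exploiting the ring geometry. For $\zeta=(\xi,\tau)\in\Ohk$ and $z=(x,t)$ with $t>t_0$ and $\d(z_0,z)^2\le\l^{k+m}$ (a deeper scale), one has $t-\tau\asymp\l^k$; the segment property (\hyperref[ditre]{D3}) together with Young's inequality gives
$$\frac{d(x,\xi)^2}{t-\tau}\le(1+\varepsilon)\frac{d(x,x_0)^2}{t-\tau}+(1+\varepsilon^{-1})\frac{d(x_0,\xi)^2}{t-\tau}\le C_\varepsilon\l^{m-1}+(1+\varepsilon^{-1})h|\log\l|.$$
Choose $\varepsilon$ so that $(1+\varepsilon^{-1})b_0\le b$ and then $m$ large enough to guarantee $C_\varepsilon\l^{m-1}\le 1$; doubling (\hyperref[didue]{D2}) bounds $|B(x,\sqrt{t-\tau})|\le c|B(x_0,\sqrt{\l^k})|$, giving $\G_{b_0}(z,\zeta)\ge c\,\l^{bh}/|B(x_0,\sqrt{\l^k})|$ for every $\zeta\in\Ohk$. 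Inserting this into the previous step produces, for the index $l=l(h,k)$ chosen so that $\Ohk$ fits in the balayage set,
$$V_l(z)\ge c\,\frac{\C_a(\Ohk)}{|B(x_0,\sqrt{\l^k})|}\l^{bh}.$$

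\noindent\textbf{Summation, iteration, and main obstacle.} Since each $V_l\le 1$, one cannot add these single-$(h,k)$ lower bounds arithmetically; instead the weighted series $\W_\rho=\sum_l\rho^l(1-V_l)$ provides the summation mechanism, by matching $l=l(h,k)$ so that the weight $\rho^{l(h,k)}$ reproduces the geometric factor $\l^{bh}$. This matching is exactly what pins down the threshold $\rho_0=\rho_0(a,b,|\H|,\l)$ in the statement. Combining this matching with the $\H$-subharmonicity of $1-V_l$ and a Harnack-type comparison based on (\hyperref[bounds]{H}) yields the one-scale contraction
$$\W_\rho(z)\le(1-c'\eta_k)\sup\bigl\{\W_\rho(z')\,:\,\d(z_0,z')\le\l^{(k-m)/2}\bigr\},\qquad \d(z_0,z)^2\le\l^{k+m},\ \rho\in\,]0,\rho_0].$$
Iterating for $k=1,\ldots,K^\ast$ and using the trivial bound $\W_\rho\le\rho/(1-\rho)$ at the macro-scale yields $\W_\rho(z)\le C\prod_{k=1}^{K^\ast}(1-c'\eta_k)\le C\exp\!\bigl(-c'\sum_{k=1}^{K^\ast}\eta_k\bigr)$, which is \eqref{wrhos}. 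The hard part is this final synthesis: passing from a family of pointwise, single-$(h,k)$ lower bounds on $V_{l(h,k)}(z)$ to a genuine multiplicative scale contraction of $\W_\rho$. A secondary obstacle is the loss $b>b_0$ forced by the triangle-inequality step above; sending $\varepsilon\to 0$ (and correspondingly $m\to\infty$) pushes $b$ arbitrarily close to $b_0$, but equality is unreachable in this axiomatic setting where only (\hyperref[ditre]{D3}) is assumed.
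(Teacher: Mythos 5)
Your first two steps are sound and run parallel to the paper's estimate for the equilibrium potentials: the comparison $\C_a\leq\C_{a_0}$, the monotonicity of balayage, and the lower Gaussian bound in (\hyperref[bounds]{H}) do give a single-scale bound of the type $V_{l}(z)\geq c\,\C_a(\Ohk)\,\l^{bh}/|B(x_0,\sqrt{\l^k})|$ (the paper proves the analogous inequality \eqref{trepunti}, evaluated at $z_0$ and for Green-equilibrium potentials of suitable cylinders rather than for balayage potentials). But the step you yourself flag as ``the hard part'' is exactly where the proof is missing: the one-scale contraction $\W_\rho(z)\le(1-c'\eta_k)\sup\{\W_\rho(z'):\d(z_0,z')\le\l^{(k-m)/2}\}$ is asserted to follow from ``$\H$-subharmonicity of $1-V_l$ and a Harnack-type comparison based on (H)'', but no such comparison is available off the shelf in this axiomatic setting; proving a growth lemma of this kind is essentially equivalent to proving the theorem. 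In the paper this multiplicative mechanism is the content of the whole of Section \ref{GL}: the refined lower Gaussian bound for the Green kernels on shrunk cylinders (Lemma \ref{bzeroproprio}), the propagation lemma $v(z)\geq\frac12 v(0)$ on much deeper cylinders $C(M,\l^p r)$, proved via the interior H\"older continuity of $\H$-parabolic functions with $p\sim p_0\log M=p_0h\log(1/\l)$ (Lemma \ref{CL1}), and the iterated, renormalized minimum principle (Lemma \ref{CL2}) which converts the values $v_j(0)$ along an arithmetic progression of scales with gap $p$ into the product bound $1-V\le\exp(-\frac12\sum_j v_j(0))$ for a single balayage potential. None of these ingredients, nor any substitute for them, appears in your argument.

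Two further points where the proposal misreads how the pieces must fit together. First, the role of $b>b_0$ and of $\rho_0$: in the paper the factor $\l^{b_0h}$ comes from evaluating the Gaussian lower bound at $z_0$ on the ring (no triangle-inequality loss), and the surplus $\l^{(b-b_0)h}$ is what absorbs the factor $p\sim h\log(1/\l)$ from the subsequence of scales and makes the series over $h$ summable; $\rho_0$ is then chosen as $e^{-C}$ to beat the loss $e^{Cl}$ incurred because only the rings with $k\gtrsim k(l,h)$ are contained in the balayage set of $V_l$ — not, as you suggest, by matching $\rho^{l(h,k)}$ to $\l^{bh}$. Second, even granting a one-scale contraction, iterating it over $k$ requires controlling the terms $\rho^l(1-V_l)$ with $l$ large relative to $k$ (for which $\Ohk$ is not contained in the set defining $V_l$, so no contraction occurs at that scale); your scheme does not address this bookkeeping, which in the paper is precisely the origin of the error terms $k^*(l,h)$, $k^{**}(l,h)$ and of the constraint on $\rho$.
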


From \eqref{wrhos} we can derive an integral estimate involving the Lebesgue measures of the following sections
$$\erolt=\left\{x\in\R^N\,:\,z=(x,\tau)\in S\smallsetminus\O,\,\d(z_0,z)\leq \sqrt{\l},\quad\exp{\left(\frac{d^2(x_0,x)}{t_0-\tau}\right)}\leq\rho\right\}.$$

\begin{theorem}\label{intnotcap}  For any $\l\in]0,1[$ and $b>b_0$ there exist positive constants $C$ and $\rho_0$ (only depending on $|\H|,\l,b$) such that
\begin{equation}\label{unodici}
\W_{\rho}(z)\leq C\exp{\left(-\frac{1}{C}\int_{\d^2(z_0,z)}^{\l}{\int_1^{+\infty}{\frac{\left|E_\l(\rho,t_0-\eta)\right|}{\left|B\left(x_0,\sqrt{\eta}\right)\right|}}\frac{d\rho}{\rho^{1+b}}\frac{d\eta}{\eta}}\right)}\qquad\mbox{for every }z\in S,
\end{equation}
for any $0<\rho\leq\rho_0$. In particular, $z_0$ is $\H$-regular if
$$\int_{0}^{\l}{\int_1^{+\infty}{\frac{\left|E_\l(\rho,t_0-\eta)\right|}{\left|B\left(x_0,\sqrt{\eta}\right)\right|}}\frac{d\rho}{\rho^{1+b}}\frac{d\eta}{\eta}}=+\infty$$
for some $b>b_0$.
\end{theorem}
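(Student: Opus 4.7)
My plan is to derive \eqref{unodici} directly from \eqref{wrhos} (applied with $a=a_0$, so the constants depend only on $|\H|,\l,b$) by replacing each capacity $\C_a(\Ohk)$ with a Lebesgue-measure lower bound and then converting the resulting truncated double series into the double integral in \eqref{unodici} via Fubini, Abel summation and Riemann-sum comparisons.

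The only non-trivial ingredient is the capacity estimate
\begin{equation*}
\C_a(\Ohk)\ \geq\ \frac{c}{\l^k}\,|\Ohk|,\qquad c=c(|\H|,a)>0,
\end{equation*}
which I would prove via the potential-theoretic inequality $\C_a(F)\geq |F|/M$ with $M=\sup_{z\in S}\int_{F}\G_a(z,\zeta)\,d\zeta$, obtained by testing against the admissible measure $\mu=M^{-1}\chi_F\,d\zeta$. For $F=\Ohk$ the standard uniform bound $\int_{\RN}\G_a(z,x',t,\tau)\,dx'\leq C$ (a consequence of (\hyperref[didue]{D2})) together with Fubini over the time strip $t_0-\tau\in[\l^{k+1},\l^k]$ gives $M\leq C\l^k$.

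Starting from this, write by Fubini $|\Ohk|=\int_{\l^{k+1}}^{\l^k}|A_h(\eta)|\,d\eta$, where $A_h(\eta)=E_\l((1/\l)^h,t_0-\eta)\setminus E_\l((1/\l)^{h-1},t_0-\eta)$ is the annular spatial slice at time $t_0-\eta$. Since $|B(x_0,\sqrt{\l^k})|\sim |B(x_0,\sqrt{\eta})|$ and $\l^k\sim\eta$ for $\eta\in[\l^{k+1},\l^k]$ by doubling, the capacity bound yields
\begin{equation*}
\frac{\C_a(\Ohk)\,\l^{bh}}{|B(x_0,\sqrt{\l^k})|}\ \geq\ c'\,\l^{bh}\int_{\l^{k+1}}^{\l^k}\frac{|A_h(\eta)|}{|B(x_0,\sqrt{\eta})|}\,\frac{d\eta}{\eta},
\end{equation*}
and summing over $k\leq \log(\d^2(z_0,z))/\log\l$ concatenates the intervals into $[\d^2(z_0,z),\l]$ (up to a factor $\l$). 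Next, a summation by parts in $h$ (using $|E_\l(1,\tau)|=0$) followed by a dyadic Riemann comparison on $[(1/\l)^{h-1},(1/\l)^h]$ (which uses that $\rho\mapsto|E_\l(\rho,\tau)|$ is nondecreasing) yields
\begin{equation*}
\sum_{h=1}^\infty \l^{bh}\,|A_h(\eta)|\ =\ (1-\l^b)\sum_{h=1}^\infty \l^{bh}\,\bigl|E_\l\bigl((1/\l)^h,t_0-\eta\bigr)\bigr|\ \geq\ c_{b,\l}\int_1^\infty \frac{|E_\l(\rho,t_0-\eta)|}{\rho^{1+b}}\,d\rho.
\end{equation*}
Plugging everything back into \eqref{wrhos} produces \eqref{unodici}.

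The $\H$-regularity assertion then follows at once from the characterization \eqref{charw}: under the divergence hypothesis the exponent on the right of \eqref{unodici} tends to $-\infty$ as $z\to z_0$ (since $\d^2(z_0,z)\to 0$), hence $\W_\rho(z)\to 0$. The main obstacle I anticipate is the capacity lower bound in the second paragraph; the remaining manipulations are routine, but care is needed to match the truncation $k\leq \log(\d^2(z_0,z))/\log\l$ with the lower limit $\eta\geq \d^2(z_0,z)$ of the $\eta$-integral and to track the $b,\l$-dependent constants through the two Riemann comparisons.
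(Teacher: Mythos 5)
Your argument is correct, and its skeleton (plug a Lebesgue-measure lower bound for the capacities into Theorem \ref{onedotwo}, then convert the truncated double series into the double integral by Riemann-sum comparisons, and conclude via \eqref{charw}) is the same as the paper's; but the key capacitary ingredient and the bookkeeping differ. The paper passes to the solid sets $\Dhk$ (i.e.\ uses \eqref{wmainz}, equivalent to \eqref{wrhos} by Remark \ref{omegad}) and invokes Proposition \ref{cAtau}(i), which bounds $\C_a(\Dhk)$ from below by the $N$-dimensional measure $m_\l\bigl((1/\l)^h,\tau\bigr)$ of a single time slice; monotonicity of $m_\l$ in $\rho$ then turns the $h$-sum into $\int_{1/\l}^{+\infty}$, and the missing range $\rho\in[1,1/\l]$ is recovered at the cost of an additive constant via $m_\l(\rho,\cdot)/|B|\le 1+c_d(\log\rho)^{Q/2}$. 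You instead keep the rings $\Ohk$ of \eqref{wrhos} and prove the space-time bound $\C_a(\Ohk)\ge |\Ohk|/(\beta\l^k)$ by testing Lebesgue measure against $\G_a$ — which is sound, and in fact rests on exactly the same estimate $\int_{\R^N}\G_a(x,t,\xi,\tau)\,d\xi\le\beta$ that the paper uses to prove Proposition \ref{cAtau}(i), so the ``main obstacle'' you flag is already available (in slice form) in Section \ref{cap}; the price you pay is the Abel summation in $h$ (legitimate, since $|E_\l(1,\tau)|=0$ and $\l^{bh}m_\l((1/\l)^h,\cdot)\to0$ by doubling), while the gain is that your dyadic comparison covers $[1,+\infty)$ directly, with no correction term. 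Two small points to tidy up: the slice of $\Ohk$ at time $t_0-\eta$ contains, rather than equals, $A_h(\eta)$ (the level set $\{\exp(d^2(x_0,\cdot)/\eta)=(1/\l)^{h-1}\}$ may be lost), but the inequality goes the right way; and for $\d^2(z_0,z)>\l$ the $\eta$-integral in \eqref{unodici} should be read as $\le 0$, so that case is trivial since $\W_\rho$ is bounded — the paper disposes of it with the convention $z_a^b(\l;s)=0$ for $s<1$.
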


Finally, from Theorem \ref{intnotcap}, we can obtain a H\"older estimate of the solutions at the boundary points satisfying an exterior $d$-cone condition. We explicitly remark that, under such geometrical condition, the series in \eqref{anonb} diverges for any $b$, ensuring the regularity of $z_0$ (as we already know by \cite{LU}).

\begin{theorem}\label{conooo}
Assume the exterior $d$-cone condition \eqref{4.4} holds at $z_0$. Let $\phi\in C(\de\O,\R)$ be such that
$[\phi]_{z_0,\delta}=\sup_{\rho>0}{\sup_{\d(z,z_0)\leq \rho}{\frac{|\phi(z)-\phi(z_0)|}{\rho^\delta}}}<\infty$
for some $\delta>0$. Then, there exist $0<\alpha_0\leq1$ and $c>0$ only depending on $|\H|, d, \delta, \O$, and the constants $M_0, r_0, \theta$ in the $d$-cone condition \eqref{4.4} such that
\begin{equation}\label{hphiquad}
|H_\phi^\Omega(z)-\phi(z_0)|\le c[\phi]_{z_0,\delta}\left(\d(z_0,z)\right)^{\alpha_0}
\end{equation}
for all $z\in\O$.
\end{theorem}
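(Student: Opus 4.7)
The plan is to deduce Theorem \ref{conooo} from Theorem \ref{intnotcap} together with the continuity modulus estimate \eqref{5.9}, by using the exterior $d$-cone condition to show that the integrand in \eqref{unodici} is bounded below uniformly, which upgrades the qualitative $\H$-regularity to a quantitative polynomial decay of the Wiener function.

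\medskip

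\noindent\emph{Step 1: Exploit the cone condition to obtain a uniform density bound.} The exterior $d$-cone condition \eqref{4.4} at $z_0$ should provide, for every small $\eta>0$, a subset of $(S\smallsetminus\O)\cap\{t=t_0-\eta\}$ which, as a set of $x$-values, sits inside a $d$-ball $B(x_0,c_1\sqrt{\eta})$ and has $N$-dimensional measure at least $c_2|B(x_0,\sqrt{\eta})|$, for constants $c_1,c_2$ depending on $M_0,r_0,\theta$ and on $|\H|$. On such a subset one has $d^2(x_0,x)/\eta\le c_1^2$, so picking $\rho_*:=e^{c_1^2}$ one obtains
\[
|E_\l(\rho,t_0-\eta)|\ge c_2|B(x_0,\sqrt{\eta})|\qquad\forall\,\rho\ge\rho_*,\,\forall\,\eta\in(0,\l],
\]
using the monotonicity of $\rho\mapsto E_\l(\rho,\cdot)$ and the fact that $\d(z_0,(x,t_0-\eta))\le\sqrt{\l}$ follows from the containment in the narrow cone.

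\medskip

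\noindent\emph{Step 2: Lower bound on the double integral in \eqref{unodici}.} With the above estimate, the inner integral in \eqref{unodici} is bounded from below independently of $\eta$:
\[
\int_{1}^{+\infty}\frac{|E_\l(\rho,t_0-\eta)|}{|B(x_0,\sqrt{\eta})|}\frac{d\rho}{\rho^{1+b}}\ge c_2\int_{\rho_*}^{+\infty}\frac{d\rho}{\rho^{1+b}}=\frac{c_2}{b\,\rho_*^{b}}=:\gamma>0.
\]
Integrating $d\eta/\eta$ from $\d^2(z_0,z)$ to $\l$ then yields
\[
\int_{\d^2(z_0,z)}^{\l}\!\!\int_{1}^{+\infty}\frac{|E_\l(\rho,t_0-\eta)|}{|B(x_0,\sqrt{\eta})|}\frac{d\rho}{\rho^{1+b}}\frac{d\eta}{\eta}\ge\gamma\log\frac{\l}{\d^2(z_0,z)}.
\]

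\medskip

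\noindent\emph{Step 3: Polynomial decay of the Wiener function.} Plugging the previous bound into \eqref{unodici} with the constants $C$ and $\rho_0$ furnished by Theorem \ref{intnotcap} (for a fixed $b>b_0$) gives, for all $z\in\O$ sufficiently close to $z_0$,
\[
\W_{\rho}(z)\le C\exp\!\Bigl(-\tfrac{\gamma}{C}\log\tfrac{\l}{\d^2(z_0,z)}\Bigr)=C'\,\d(z_0,z)^{2\gamma/C},
\]
so $\W_\rho(z)\le C'\,\d(z_0,z)^{\beta}$ with $\beta:=2\gamma/C>0$ depending only on $|\H|,d$ and the cone parameters.

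\medskip

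\noindent\emph{Step 4: From $\W$-decay to H\"older decay of $H_\phi^\Omega$.} We now use \eqref{5.9}. By the construction of $\tilde{\phi}(z_0,\cdot)$ in \cite[Theorem 5.2, Remark 5.3]{LU}, if the boundary datum has H\"older seminorm $[\phi]_{z_0,\delta}<\infty$, then $\tilde{\phi}(z_0,s)$ is controlled by a constant multiple of $[\phi]_{z_0,\delta}\,s^{\sigma}$ for some $\sigma=\sigma(\delta)\in(0,1]$ and small $s$, simply because $\W$ is a convex combination of quantities bounded by $1$ and the oscillation of $\phi$ on parabolic annuli of size $\l^{l/2}$ at $z_0$ is at most $[\phi]_{z_0,\delta}\,\l^{l\delta/2}$. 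Combining this with Step 3,
\[
|H_\phi^\Omega(z)-\phi(z_0)|\le\tilde{\phi}(z_0,\W_\rho(z))\le c\,[\phi]_{z_0,\delta}\,\W_\rho(z)^{\sigma}\le c'\,[\phi]_{z_0,\delta}\,\d(z_0,z)^{\beta\sigma},
\]
which is \eqref{hphiquad} with $\alpha_0:=\min\{1,\beta\sigma\}$. For $z$ not close to $z_0$, \eqref{hphiquad} follows trivially from the maximum principle after enlarging the constant $c$.

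\medskip

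\noindent\emph{Main obstacle.} The delicate point is Step 1: one must read off from the exterior $d$-cone condition \eqref{4.4} a quantitative measure estimate for the relevant time-slices of $S\smallsetminus\O$ intersected with the tubular region defined by $\exp(d^2(x_0,\cdot)/\eta)\le\rho_*$ and $\d(z_0,\cdot)\le\sqrt{\l}$. This is a geometric computation that uses the doubling property (D2) and the segment property (D3) to bound the measure of the cone slice from below by a constant multiple of $|B(x_0,\sqrt{\eta})|$, uniformly in $\eta\in(0,\l]$. A secondary technical point is to make the asymptotics of $\tilde\phi(z_0,s)$ for H\"older data fully explicit, which amounts to a bookkeeping exercise on the telescoping sum \eqref{vudoppia} using that $1-V_l\le 1$.
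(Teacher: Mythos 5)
Your proposal is correct and follows essentially the same route as the paper: your Steps 1--3 reproduce the proof of Theorem \ref{cconduno} (the cone condition yields $m_\l(\bar{\rho},t_0-\eta)\ge\theta\left|B\left(x_0,\sqrt{\log(\bar{\rho})\,\eta}\right)\right|$ with $\bar{\rho}=\exp(M_0^2)$ for small $\eta$, hence a logarithmic lower bound on the integral in \eqref{unodici} and the polynomial decay $\W(z)\le C\,\d(z_0,z)^\alpha$), and your Step 4 is exactly the paper's Corollary \ref{ncor} combined with the H\"older-data estimate of $\tilde{\phi}$. The only points to note are that the bound $\tilde{\phi}(z_0,s)\le c\,[\phi]_{z_0,\delta}\,s^{\sigma}$, which you dismiss as bookkeeping, is what the paper imports from \cite[Proposition 5.7]{LU} and requires a reverse-doubling property of $d$ (verified from (D1)--(D3) as in \cite{DGL}), and that your Step 1 density bound holds only for $\eta\le\min\{r_0^2,\l/\sqrt{1+M_0^4}\}$ rather than all $\eta\in(0,\l]$, which costs merely an additive constant in Step 2.
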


We would like to emphasize that the estimates \eqref{5.9}-\eqref{wrhos}-\eqref{unodici}-\eqref{hphiquad} depend on the operator $\H$ only through the constant $|\H|$ in \eqref{dipacca}. This allows to extend our results to operators with non-smooth coefficients. See also Subsection \ref{substo} below.

This paper is organized as follows. In Section \ref{cap} we recall the different notions of capacities we are going to exploit, i.e. the balayage capacity and the capacities with respect to $\Gamma$ and to the Gaussian kernels. We show how they compare each other and with the Lebesgue measure. In Section \ref{GL} we establish Gaussian bounds for the Green kernels on suitable cylinders. Then, we use them to prove a couple of very technical but powerful lemmas (Lemma \ref{CL1} and Lemma \ref{CL2}). They deal with an estimate of the balayage potential of some compact set by a term involving Green-equilibrium potentials. These lemmas will be crucial in Section \ref{ZSchD}, where we are going to complete the proofs of Theorem \ref{onedotwo} and Theorem \ref{mmmain}. We first establish Theorem \ref{onedotwo} and we derive from that Theorem \ref{mmmain}, part $(i)$. The proof of part $(ii)$ is obtained by bounding the balayage potential of $\Ohk$ and it closes Section \ref{ZSchD}. In Section \ref{IBCCond} we prove the integral estimate in Theorem \ref{intnotcap}. The $d$-cone condition allows us to bound further this integral term and to get at last Theorem \ref{conooo}.

\subsection{Some comments and historical notes}\label{substo} We would like to comment here on Theorem \ref{mmmain}. If $\G^{(e)}_a$ is the Gaussian function related to the Euclidean distance in $\R^N$, i.e.
$$\G^{(e)}_a(x,t,\xi,\tau)=\frac{1}{\omega_N(t-\tau)^{\frac{N}{2}}} \exp\left(-a \frac{\left|x-\xi\right|^2}{t-\tau}\right),\qquad\mbox{for }t>\tau,$$
then the $\G^{(e)}_a$-capacity of a compact set is independent of $a$. Precisely, for any $a,b>0$ there exists a constant $c=c(a,b)>0$ (independent of $F$), such that
\begin{equation}\label{cape}
\frac{1}{c}\C_a(F)\leq\C_b(F)\leq c\,\C_a(F)\qquad\mbox{for any compact set }F\subset \R^{N+1}.
\end{equation}
This not trivial result was proved in \cite[Proposizione 2]{L75}.\footnote{We do not know if \eqref{cape} holds for Gaussian kernels related to non-Euclidean distances.} As a consequence, in the Euclidean case, we can replace $\C_a$ and $\C_b$ in \eqref{anonb} and \eqref{bnona} with
$$\C=\C_{\frac{1}{4}}$$
that is the capacity related to the Heat kernel and to the Heat operator $\Delta-\de_t$. Hence, in the Euclidean case, the Wiener-type series in \eqref{anonb} and \eqref{bnona} take the form
\begin{equation}\label{lseriee}
\sum_{h,k=1}^{+\infty}{\frac{\C\left(\Ohk\right)}{\l^{\frac{kN}{2}}}\l^{\alpha h}}.
\end{equation}
By Theorem \ref{mmmain}, if the series in \eqref{lseriee} is divergent then  $z_0 \in \partial \Omega$ is $\left(\frac{1}{\beta}\Delta-\de_t\right)$-regular if\footnote{The fundamental solution of $\left(\frac{1}{\beta}\Delta-\de_t\right)$ is the Euclidean Gaussian $G^e_{\frac{\beta}{4}}$.} $0< \frac{\beta}{4}<\alpha$. Viceversa, if $z_0$ is $\left(\frac{1}{\beta}\Delta-\de_t\right)$-regular then the series \eqref{lseriee} is divergent
 for every $\alpha \leq \frac{\beta}{4}$. This result was first proved in \cite[Lemma 2.4 and Teorema A]{L75} (see also \cite{L77}). One of its consequences is the following one: if $z_0 \in \partial \Omega$ is $\left(\frac{1}{\beta}\Delta-\de_t\right)$-regular and $\gamma < \beta$,
 then $z_0$ is  $\left(\frac{1}{\gamma}\Delta-\de_t\right)$-regular. This result is sharp. Indeed, if $\gamma < \beta$, using the classical Petrowski's regularity criterion in \cite{Pe} (see also \cite[Theorem 8.1]{EK}), one can find an open set $\Omega$ and a point $z_0 \in \partial \Omega$ which is regular for $\left(\frac{1}{\gamma}\Delta-\de_t\right)$ and not regular for $\left(\frac{1}{\beta}\Delta-\de_t\right)$.

We would also like to recall here the celebrated Wiener test for the Heat equation. Define, for $k\in\N$,
$$\Omega'_{k}(z_0)=\left\{z\in \R^{N+1}\setminus\Omega\,:\, \l^{-k}\leq\Gamma^e(z_0,z)\le\l^{-(k+1)}\,\right\},$$
where $\Gamma^e=\G^{(e)}_{\frac{1}{4}}$ denotes the Heat kernel, i.e. the fundamental solution of $\Delta-\de_t$. Then,
$$z_0\mbox{ is }\left(\Delta-\de_t\right)\mbox{-regular}\qquad\mbox{if and only if}\qquad\sum_{k=1}^{+\infty}{\frac{\C\left(\Omega'_{k}(z_0)\right)}{\l^{k}}}=\infty.$$
The easy part of this criterion (its {\emph{only if}} part) was proved in \cite{L73}. The {\emph{if}} part is due to Evans and Gariepy in \cite{EG}. A necessary and sufficient condition of $\left(\Delta-\de_t\right)$-regularity in terms of Wiener-type series was previously proved by Landis in \cite{La}. Landis's criterion involves series of the type $\sum_k{v_k(z_0)}$, where $v_k$ is the Heat-equilibrium potential of $\{z\in\R^{N+1}\smallsetminus\O\,:\,\rho_k\leq\Gamma^e(z_0,z)\leq\rho_{k+1}\}$ being $\rho_k$ a certain sequence of positive real numbers such that $\frac{\rho_{k+1}}{\rho_k}\nearrow+\infty$. The first $\left(\Delta-\de_t\right)$-regularity criterion involving Heat-capacity and the level-rings of the fundamental solution appeared in literature in 1954, and it is due to Pini \cite{Pi}. Pini's result is related to the Heat equation in spatial dimension $N=1$, and gives a sufficient regularity criterion for particular open sets with continuous boundary. The Evans-Gariepy Wiener test was extended to parabolic operators with smooth variable coefficients by Garofalo and Lanconelli in \cite{GL}, and to parabolic operators with $C^1$-Dini continuous coefficients by Fabes-Garofalo-Lanconelli in \cite{FGL}.

Classical parabolic operators in divergence form, with merely measurable coefficients, are endowed with a fundamental solution satisfying the estimates (\hyperref[bounds]{H}) with respect to Euclidean Gaussians  $G^e_a$ (see \cite{Ar}, see also \cite{PE, lSC, Gr}). Then all our results apply to these equations: they were first proved in \cite{L75} and in \cite{L75p}.

{\it Degenerate parabolic} operators with H\"older continuous coefficients, of the kind
\begin{equation}\label{Hormander}
\sum_{i,j =1}^m a_{i,j}(x,t) X_i X_j + \sum_{k=1}^m a_k(x,t) X_k - \partial_t
\end{equation}
where $ {\mathcal{X}}=\{ X_1, \dots, X_m\}$ is a system of vector fields satisfying the celebrated H\"ormander rank condition, have a fundamental solution satisfying the estimates (\hyperref[bounds]{H}) with respect to $d$-Gaussian functions, where $d$ is the Carnot-Caratheodory distance related to ${\mathcal{X}}$.
The matrix $(a_{i,j})_{i,j= 1,\dots, m}$ is symmetric and uniformly strictly positive definite. Then, the operator in (\ref{Hormander}) can be suitably approximated with a sequence $(\H_j)_{j \in \N}$ of operators of the kind \eqref{IHG} with smooth coefficients and such that the sequence of constants $|\H_j|$ in \eqref{dipacca} has a finite upper bound (we directly refer to the papers \cite{BLU, BB, BBLU, LU, Ugu} for the details, see also \cite{JSC, SCS}). Then all our results in the present paper apply to the operators in \eqref{Hormander}.

In the stationary case, Wiener-type tests for second order degenerate-elliptic equations with underlying sub-Riemannian structures are well settled in literature, see the papers by Hueber \cite{Hu}, Hansen and Hueber \cite{HH}, Negrini and Scornazzani \cite{NS}; see also the very recent papers \cite{TU, Ug}, and the references therein. On the contrary, as far as we know only a few papers have been devoted to the Wiener test for evolution equations in sub-Riemannan settings: we mention the paper by Scornazzani \cite{Sc}, where a Wiener test of Landis-type for a Kolmogorov equation is proved, and the work \cite{GS} of Garofalo and Segala, in which the Wiener test for the Heat equation on the Heisenberg group is established. In these settings, more literature is available relating to the boundary behavior, in sufficiently regular domains, of {\emph{nonnegative}} solutions to evolution equations  (see e.g. the recent papers \cite{CNP1, CNP2, FGGMN, Mu}, and the references therein).

\section{Capacities}\label{cap}

We want to briefly recall here some classical notions of potential theory. They allow us to define and compare all the different capacities which play a big role for our scopes.

\vskip 0.5cm

For a given a compact set $F\subseteq S$, we put $$\Phi_F=\{v\in\overline{\HH}(S)\,:\,v\ge 0 \text{ in }S,\ v\ge 1 \text{ in }F\},$$
where $\overline{\HH}(S)$ is the set of $\H$-superharmonic functions in $S$. We also indicate $W_F=\inf\{v\,:\,v\in \Phi_F\}$. Then we can define\footnote{We agree to let
$$\liminf_{\zeta\to z}w(\zeta)=\sup_{V\in\mathcal{U}_z}(\inf_{V} w)$$
being $\mathcal{U}_z$ a basis of neighborhoods of $z$.} the ($\H$-){\em{balayage}} potential of $F$
$$V_F(z)=\liminf_{\zeta\to z}W_F(\zeta),\qquad z\in S.$$
We are going to denote by $\mu_F$ the Riesz-measure of $V_F$, and we let $\C_\H(F)=\mu_F(F)$.

\vskip 0.5cm

Let now $X$ be a Hausdorff locally compact topological space, and let $K:X\times X\rightarrow[0,+\infty]$ be a lower semicontinuous function. If in addition $K(\cdot,\zeta)\not\equiv0$ for any fixed $\zeta\in X$, we will say that $K$ is a kernel on $X$. Given a compact set $F\subseteq X$, we denote by $\mathcal{M}^+(F)$ the set of nonnegative Radon measures supported on $F$. Let us define
$$\C_K(F)=\sup{\left\{\mu(F)\,:\,\mu\in\mathcal{M}^+(F),\mbox{ and }\,K\ast\mu(z)=\int K(z,\zeta)d\mu(\zeta)\leq1\,\,\forall z\in X\right\}}.$$
Let us also denote by $\mathcal{F}(X)$ the collection of the compact subsets of $X$. The following statements are quite standard (see e.g. the classical paper by Fuglede \cite[Chapter 1, Section 2]{Fu}):
\begin{itemize}
\item[(i)] $\C_K(F)<\infty$ for any $F\in\mathcal{F}(X)$;
\item[(ii)] if $F_1,F_2\in\mathcal{F}(X)$ with $F_1\subseteq F_2$, then $\C_K(F_1)\leq\C_K(F_2)$;
\item[(iii)] if $K_1, K_2$ are kernel on $X$ such that $K_1\leq K_2$, then $\C_{K_1}(F)\geq\C_{K_2}(F)$ for every $F\in\mathcal{F}(X)$;
\item[(iv)] for every $F\in\mathcal{F}(X)$ there exists $\mu=\mu_K\in\mathcal{M}^+(F)$ with $K\ast\mu\leq 1$ in $X$ such that
$$\mu(F)=\C_K(F);$$
\item[(v)] if $F\subseteq\cup_{k\in\N}{F_k}$ with $F,F_k\in\mathcal{F}(X)$, then $\C_K(F)\leq\sum_{k\in\N}\C_K(F_k)$.
\end{itemize}
The measure $\mu_K$ will be called $K$-equilibrium measure of $F$, and the function $K\ast\mu_K$ will be called a $K$-equilibrium potential of $F$. In what follows we will exploit these notions mostly with the kernels $\Gamma$ and $\G_a$. We will always write $\C_a$ instead of $\C_{\G_a}$.

\vskip0.5cm

We now start to establish some capacitary estimates. The following will be exploited in Section \ref{ZSchD}.

\begin{proposition} Let $z_0=(x_0,t_0)\in S$, and let $\B(z_0,r)$ be such that $\B\left(z_0,(1+\theta)r\right)\subseteq S$, with $\theta>0$. Then, for any $a>0$, there exists a constant $C$ depending on $\theta$ such that
\begin{equation}\label{cipiuno}
\C_a\left(\vphantom{\sum^m}\overline{\B(z_0,r)}\right)\leq C\left|B(x_0,r)\right|.
\end{equation}
\end{proposition}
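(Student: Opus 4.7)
My approach is via duality. By property (iv) of capacities, $\C_a(\overline{\B(z_0,r)})$ is attained by an extremal measure $\mu\in\mathcal{M}^+(\overline{\B(z_0,r)})$ with $\G_a\ast\mu\le 1$ on $S$. It therefore suffices to exhibit a single test point $z_\ast\in S$, lying strictly above $\overline{\B(z_0,r)}$ in time, at which $\G_a(z_\ast,\cdot)$ admits a uniform lower bound of order $1/|B(x_0,r)|$ on $\overline{\B(z_0,r)}$. The desired inequality $\mu(\overline{\B(z_0,r)})\le C|B(x_0,r)|$ then falls out by integrating against $\G_a\ast\mu\le 1$ at the point $z_\ast$ and passing to the supremum over admissible $\mu$.

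Concretely, the hypothesis $\B(z_0,(1+\theta)r)\subseteq S$ gives room above $z_0$ in time (it forces $t_0+(1+\theta)^2 r^2\le T_2$). I would pick $z_\ast=(x_0,t_0+\alpha r^2)$ with $\alpha=\tfrac{1+(1+\theta)^2}{2}\in\bigl(1,(1+\theta)^2\bigr)$; since $\d(z_0,z_\ast)=\sqrt{\alpha}\,r<(1+\theta)r$, this point lies in $S$. For any $\zeta=(\xi,\tau)\in\overline{\B(z_0,r)}$, the bound $\d(z_0,\zeta)\le r$ yields $d(x_0,\xi)\le r$ and $|t_0-\tau|\le r^2$, so
$$
(\alpha-1)r^2\le t_\ast-\tau\le (\alpha+1)r^2,\qquad \frac{d(x_0,\xi)^2}{t_\ast-\tau}\le \frac{1}{\alpha-1}.
$$
The doubling condition (D2) (applied to the range $\sqrt{\alpha-1}\,r\le \sqrt{t_\ast-\tau}\le \sqrt{\alpha+1}\,r$) gives $|B(x_0,\sqrt{t_\ast-\tau})|\le C|B(x_0,r)|$ with $C=C(\theta,c_d)$. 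Inserting these estimates in the definition of $\G_a$ produces
$$
\G_a(z_\ast,\zeta)\ge \frac{c}{|B(x_0,r)|},\qquad \zeta\in\overline{\B(z_0,r)},
$$
for some $c=c(a,\theta,c_d)>0$.

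For any admissible measure $\mu$, evaluating $\G_a\ast\mu\le 1$ at $z_\ast$ yields
$$
1\ge \G_a\ast\mu(z_\ast)=\int_{\overline{\B(z_0,r)}}\G_a(z_\ast,\zeta)\,d\mu(\zeta)\ge \frac{c}{|B(x_0,r)|}\,\mu(\overline{\B(z_0,r)}),
$$
and taking the supremum over such $\mu$ gives \eqref{cipiuno}. The only real subtlety is the choice of $\alpha$: it must be strictly greater than $1$ (so that $t_\ast>\tau$ holds with a definite gap, keeping both the exponential factor and the ratio of ball volumes under control) while remaining strictly less than $(1+\theta)^2$ (so that $z_\ast\in S$). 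This is possible precisely because $\theta>0$ forces $(1+\theta)^2>1$, which is exactly how the geometric hypothesis on $\theta$ enters.
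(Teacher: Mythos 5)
Your proof is correct and takes essentially the same route as the paper's: both choose a test point of the form $(x_0,t_0+\alpha r^2)$ with $1<\alpha<(1+\theta)^2$ (the paper uses $\alpha=(1+\tfrac{\theta}{2})^2$), bound $\G_a$ from below by $c/|B(x_0,r)|$ on $\overline{\B(z_0,r)}$ using the time gap, the bound $d(x_0,\xi)\le r$ and doubling, and then evaluate $\G_a\ast\mu\le 1$ at that point for an admissible (in the paper, equilibrium) measure. The only differences are cosmetic choices of the constant $\alpha$ and whether one tests the extremal measure or all admissible measures before taking the supremum.
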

\begin{proof}
Let us put $z_r=(x_0,t_r)$, where $t_r=t_0+\left(1+\frac{\theta}{2}\right)^2r^2$. We note that $z_r\in S$. For every $z=(x,t)\in\overline{\B(z_0,r)}$ we have
$$t_r-t=t_r-t_0+t_0-t\geq \left(1+\frac{\theta}{2}\right)^2r^2-r^2\geq\theta r^2 \qquad\mbox{and}\qquad\frac{d^2(x,x_0)}{t_r-t}\leq\frac{r^2}{\theta r^2}=\frac{1}{\theta},$$
so that
$$\G_a(z_r,z)\geq\frac{1}{\left|B\left(x_0,\sqrt{\theta}r\right)\right|}\exp{\left(-\frac{a}{\theta}\right)} \geq\frac{1}{C}\frac{1}{\left|B(x_0, r)\right|}.$$
As a consequence, if $v$ and $\nu$ are, respectively, a $\G_a$-equilibrium potential and measure of $\overline{\B(z_0,r)}$, we have
$$1\geq v(z_r)=\int_{\overline{\B(z_0,r)}}\G_a(z_r,\zeta)\,d\nu(\zeta)\geq\frac{1}{C\left|B(x_0, r)\right|}\,\,\, \nu\left(\vphantom{\sum^m}\overline{\B(z_0,r)}\right)$$
and hence $\C_a\left(\vphantom{\sum^m}\overline{\B(z_0,r)}\right)=\nu\left(\vphantom{\sum^m}\overline{\B(z_0,r)}\right)\leq C\left|B(x_0, r)\right|$.
\end{proof}

\vskip0.5cm

Let $F$ be a compact set contained in $S$. We want to compare $\C_\H(F)$ and $\C_\Gamma(F)$. If $\mu_F$ is the balayage-measure of $F$, from the fact that $\Gamma\ast\mu_F\leq 1$ in $S$ (see \cite[Proposition 8.3]{LU}), we immediately get
$$\C_\H(F)\leq \C_\Gamma(F).$$
To prove the reverse inequality, let us denote by $\nu$ a $\Gamma$-equilibrium measure of $F$. Then $\Gamma\ast\nu\leq 1$ in $S$ so that, if $u\in\Phi_F$, we have
\begin{eqnarray*}
&\,&u-\Gamma\ast\nu\in\overline{\HH}(S\smallsetminus F),\quad \liminf_{S\smallsetminus F\ni z\rightarrow\zeta}{(u-\Gamma\ast\nu)(z)}\geq u(\zeta)-1\geq0\quad\forall\zeta\in\de F\\
&\qquad&\qquad\qquad\mbox{and }\liminf_{d(x,0)\rightarrow+\infty}{(u-\Gamma\ast\nu)(x,t)}\geq 0\quad\forall t\in]T_1,T_2[.
\end{eqnarray*}
The minimum principle (see \cite[Proposition 3.10]{LU}) implies $u\geq\Gamma\ast\nu$ in $S\smallsetminus F$. This inequality holds all over $S$ since $u\geq 1 \geq\Gamma\ast\nu $ on $F$. Thus $u\geq\Gamma\ast\nu$ for all $u\in\Phi_F$. As a consequence $W_F\geq\Gamma\ast\nu$ and hence
$$V_F(z)=\liminf_{\zeta\rightarrow z}W_F(\zeta)\geq\liminf_{\zeta\rightarrow z}\Gamma\ast\nu(\zeta)\geq\Gamma\ast\nu(z)\qquad\mbox{for all }z\in S.$$
From the fact that $V_F=\Gamma\ast\mu_F$ in $S\smallsetminus\de F$ (see \cite[Proposition 8.3]{LU}), we obtain
\begin{equation}\label{compot}
\Gamma\ast\nu\leq\Gamma\ast\mu_F\qquad\mbox{in }S\smallsetminus\de F.
\end{equation}
Now we need a lemma.
\begin{lemma}\label{commis}
Let $\mu_1, \mu_2\in \mathcal{M}^+(F)$ such that $\Gamma\ast\mu_1\leq\Gamma\ast\mu_2$ in $S\smallsetminus F$. Then
$$\mu_1(F)\leq c\,\mu_2(F).$$
\end{lemma}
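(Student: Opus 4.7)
The plan is to integrate the potential inequality over a horizontal time slice lying strictly above $F$, and then invoke the two-sided Gaussian bound (H) plus the doubling condition (D2) to turn the resulting inequality into a comparison of total masses. Concretely, since $F$ is compact in $S$ I pick $t^*$ with $\max\{t : (x,t)\in F\} < t^* < T_2$, so that the entire slice $\R^N \times \{t^*\}$ sits in $S\smallsetminus F$. The hypothesis then yields
$$\int_F \Gamma(x, t^*, \zeta)\, d\mu_1(\zeta) \;\leq\; \int_F \Gamma(x, t^*, \zeta)\, d\mu_2(\zeta) \qquad \text{for every } x \in \R^N.$$
Integrating in $x\in\R^N$ and invoking Tonelli (legal because $\Gamma\ge 0$) produces
$$\int_F g(\zeta)\, d\mu_1(\zeta) \;\leq\; \int_F g(\zeta)\, d\mu_2(\zeta), \qquad g(\zeta) \;:=\; \int_{\R^N} \Gamma(x, t^*, \xi, \tau)\, dx.$$

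The heart of the proof is verifying that $g(\zeta)\asymp 1$ uniformly for $\zeta=(\xi,\tau)\in F$; after that, the lemma follows at once with $c$ the squared ratio of the two bounds. By (H) it suffices to bound $\int_{\R^N}\G_{b_0}(x,t^*,\xi,\tau)\,dx$ from below and $\int_{\R^N}\G_{a_0}(x,t^*,\xi,\tau)\,dx$ from above, uniformly in $(\xi,\tau)$. With $r=\sqrt{t^*-\tau}>0$, the lower bound is obtained by restricting the integration to the ball $B(\xi,r)$, where the exponential factor is bounded below by $e^{-b_0}$ and doubling gives $|B(x,r)|\le c_d |B(\xi,r)|$. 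The upper bound follows from the standard dyadic decomposition $\R^N=B(\xi,r)\sqcup \bigsqcup_{k\ge 1}A_k$ with $A_k=\{2^{k-1}r\le d(x,\xi)<2^k r\}$: on $A_k$ doubling yields $|B(x,r)|^{-1}\le c_d^{\,k+1}|B(\xi,r)|^{-1}$ and $|A_k|\le c_d^{\,k}|B(\xi,r)|$, so each piece contributes at most $c_d^{\,2k+1}\exp(-a_0\,4^{k-1})$, and the resulting Gaussian series converges to a constant depending only on $c_d$ and $a_0$.

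Both estimates are scale-invariant in $r$, hence uniform over $\zeta\in F$, with constants depending only on $|\H|$. Chaining them with the previous inequality one obtains
$$\mu_1(F) \;\leq\; c \!\int_F g\, d\mu_1 \;\leq\; c \!\int_F g\, d\mu_2 \;\leq\; c^2\,\mu_2(F),$$
which is the desired bound. I expect no real obstacle: this is a version of the familiar trick of testing a potential inequality against a mass-one-like quantity, and doubling is exactly what guarantees that the Gaussian integral of $\G_a(\,\cdot\,,t^*,\xi,\tau)$ is comparable to a universal constant irrespective of the scale $r=\sqrt{t^*-\tau}$. The only point requiring some care is that one is allowed to test on $\R^N\times\{t^*\}$ because $t^*$ lies above $\sup\{t:(x,t)\in F\}$; this choice is always available since $F\Subset S$.
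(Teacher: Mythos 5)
Your proof is correct and follows essentially the same route as the paper's: integrate the potential inequality over a time slice $\R^N\times\{t^*\}$ above $F$, apply Tonelli, and use the uniform two-sided bound on $\int_{\R^N}\Gamma(x,t^*,\xi,\tau)\,dx$ coming from (H). The only difference is that you prove the uniform comparability of the Gaussian slice integrals with a constant directly (via doubling and dyadic annuli), while the paper simply cites Propositions 2.2 and 2.4 of \cite{LU} for the same bounds.
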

\begin{proof}
Let $t\in]T_1,T_2[$ such that $F\subset\R^N\times]T_1,t[$. Then, for any $\xi\in\R^N$ and $\tau\in]T_1,t[$, we have
$$\frac{1}{\beta\Lambda}\leq\frac{1}{\Lambda}\int_{\R^N}\G_{b_0}(x,t,\xi,\tau)\,dx\leq\int_{\R^N}\Gamma(x,t,\xi,\tau)\,dx\leq\Lambda\int_{\R^N}\G_{a_0}(x,t,\xi,\tau)\,dx\leq\beta\Lambda$$
for some positive structural constant $\beta$ (see \cite[Proposition 2.2 and Proposition 2.4]{LU}). As a consequence
\begin{eqnarray*}
\mu_1(F)&=&\int_{F}d\mu_1\leq\beta\Lambda\int_{F}{\left(\int_{\R^N}\Gamma(x,t,\xi,\tau)\,dx\right)d\mu_1(\xi,\tau)}\\
&=&\beta\Lambda\int_{\R^N}{\left(\int_{F}\Gamma(x,t,\xi,\tau)\,d\mu_1(\xi,\tau)\right)dx}=\beta\Lambda\int_{\R^N}\Gamma\ast\mu_1(x,t)\,dx\\
&\leq&\beta\Lambda\int_{\R^N}\Gamma\ast\mu_2(x,t)\,dx=\beta\Lambda\int_{F}{\left(\int_{\R^N}\Gamma(x,t,\xi,\tau)\,dx\right)d\mu_2(\xi,\tau)}\\
&\leq&(\beta\Lambda)^2\int_{F}d\mu_2=c\,\mu_2(F).
\end{eqnarray*}
\end{proof}

The last lemma and inequality \eqref{compot} imply
$$\C_\Gamma(F)=\nu(F)\leq c\,\mu_F(F)=c\,\C_\H(F).$$
Thus, we have just proved the following proposition.
\begin{proposition}\label{chcgeq}
The capacities $\C_\Gamma$ and $\C_\H$ are equivalent on the family of the compact subsets of $S$.
\end{proposition}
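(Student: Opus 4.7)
My plan is essentially to synthesize the two ingredients that have already been assembled in the paragraphs immediately preceding the proposition statement. Writing the claim as the existence of a constant $c>0$ (depending only on $|\H|$) with
\[
c^{-1}\,\C_\Gamma(F)\leq \C_\H(F)\leq \C_\Gamma(F)\qquad\text{for every compact }F\subset S,
\]
the two inequalities will be handled separately.

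For the easy direction $\C_\H(F)\leq \C_\Gamma(F)$, I would simply observe that the balayage measure $\mu_F$ satisfies $\Gamma\ast\mu_F\leq 1$ on $S$ by \cite[Proposition 8.3]{LU}. Hence $\mu_F$ is an admissible competitor in the variational supremum defining $\C_\Gamma(F)$, and since $\mu_F(F)=\C_\H(F)$ by definition, the inequality follows at once.

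For the nontrivial direction $\C_\Gamma(F)\leq c\,\C_\H(F)$, I would let $\nu$ denote a $\Gamma$-equilibrium measure of $F$ and invoke the potential comparison $\Gamma\ast\nu\leq \Gamma\ast\mu_F$ on $S\setminus\partial F$, which was derived above. Since $S\setminus F\subset S\setminus\partial F$, the pair $(\mu_1,\mu_2)=(\nu,\mu_F)$ satisfies the hypotheses of Lemma \ref{commis}, whose application delivers $\nu(F)\leq c\,\mu_F(F)$, that is $\C_\Gamma(F)\leq c\,\C_\H(F)$.

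The genuinely substantive step — already carried out above — is the production of the inequality $\Gamma\ast\nu\leq \Gamma\ast\mu_F$ on $S\setminus\partial F$, and I expect this to be where all the work sits. It required the minimum principle applied to $u-\Gamma\ast\nu$ for every $u\in\Phi_F$ (with the boundary and infinity conditions verified using $\Gamma\ast\nu\leq 1$ and the Gaussian decay of $\Gamma$), followed by the passage to $W_F$ and then to its lower-semicontinuous regularization $V_F$, exploiting $V_F=\Gamma\ast\mu_F$ off $\partial F$. The remainder of the argument is the combinatorial assembly above, and the asymmetry in the two inequalities (a bare constant $1$ on one side, a structural constant $c$ on the other) reflects precisely the lack of symmetry of $\Gamma$ in the time variable, which Lemma \ref{commis} quantifies by integrating in the spatial variable at a single time slice strictly above $F$.
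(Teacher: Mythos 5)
Your proposal is correct and follows the same route as the paper, which proves the proposition exactly by the two steps you describe: $\C_\H(F)\leq\C_\Gamma(F)$ from the admissibility of the balayage measure $\mu_F$ (since $\Gamma\ast\mu_F\leq 1$), and $\C_\Gamma(F)\leq c\,\C_\H(F)$ by combining the minimum-principle comparison $\Gamma\ast\nu\leq\Gamma\ast\mu_F$ in $S\smallsetminus\de F$ with Lemma \ref{commis}. Your observation that $S\smallsetminus F\subset S\smallsetminus\de F$ suffices for the hypothesis of Lemma \ref{commis} is precisely how the paper uses it, so nothing is missing.
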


\begin{corollary}\label{aHb}
For every $0<a\leq a_0$ and $b\geq b_0$ there exists a positive constant $c=c(a_0,b_0)$ such that
$$\frac{1}{c}\C_a(F)\leq\C_\H(F)\leq c\,\C_b(F)$$
for every compact set $F\subset S$.
\end{corollary}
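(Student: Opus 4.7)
The strategy is to use Proposition \ref{chcgeq} to trade $\C_\H$ for $\C_\Gamma$, and then exploit the two-sided bound \hyperref[bounds]{(H)} together with the elementary monotonicity/scaling properties of capacities recalled in (i)--(v) to sandwich $\C_\Gamma(F)$ between $\C_{a_0}(F)$ and $\C_{b_0}(F)$. A scaling remark I will use repeatedly: for a kernel $K$ and a constant $\alpha>0$,
\begin{equation*}
\C_{\alpha K}(F)=\tfrac{1}{\alpha}\C_K(F),
\end{equation*}
since $(\alpha K)*\mu\le 1$ iff $K*(\alpha\mu)\le 1$.

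First I would observe that by \hyperref[bounds]{(H)} one has the kernel inequalities $\Gamma\le \Lambda\,\G_{a_0}$ and $\tfrac{1}{\Lambda}\G_{b_0}\le \Gamma$ on $S\times S$. Applying property (iii) of capacities together with the scaling remark above, these yield
\begin{equation*}
\C_{\Gamma}(F)\ge \C_{\Lambda\,\G_{a_0}}(F)=\tfrac{1}{\Lambda}\,\C_{a_0}(F),\qquad \C_{\Gamma}(F)\le \C_{\frac{1}{\Lambda}\G_{b_0}}(F)=\Lambda\,\C_{b_0}(F).
\end{equation*}
Combining with Proposition \ref{chcgeq} (which gives $\tfrac{1}{c_1}\C_\Gamma\le \C_\H\le c_1\C_\Gamma$), we obtain at once
\begin{equation*}
\tfrac{1}{c_1\Lambda}\,\C_{a_0}(F)\le \C_\H(F)\le c_1\Lambda\,\C_{b_0}(F).
\end{equation*}

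It remains to pass from the specific exponents $a_0,b_0$ to arbitrary $a\in(0,a_0]$, $b\ge b_0$. For this, note that as functions on $S\times S$ the Gaussians satisfy $\G_a\ge \G_{a_0}$ when $a\le a_0$, and $\G_b\le \G_{b_0}$ when $b\ge b_0$, simply because the exponential $\exp(-a\,d^2(x,\xi)/(t-\tau))$ is decreasing in $a$ (and the prefactor $|B_d(x,\sqrt{t-\tau})|^{-1}$ does not depend on the exponent). Invoking property (iii) once more,
\begin{equation*}
\C_a(F)\le \C_{a_0}(F),\qquad \C_{b_0}(F)\le \C_b(F).
\end{equation*}
Chaining these two inequalities with the previous display gives the desired bounds with $c=c_1\Lambda$, which depends only on $\Lambda$ and the equivalence constant of Proposition \ref{chcgeq} (both absorbed in $|\H|$ through $a_0,b_0$).

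There is essentially no obstacle here: the proof is a bookkeeping exercise in kernel monotonicity and rescaling. The only point that requires a moment of care is the scaling identity $\C_{\alpha K}=\alpha^{-1}\C_K$, so that the multiplicative constant $\Lambda$ in \hyperref[bounds]{(H)} propagates correctly through the capacity instead of being lost; everything else is a direct application of the properties of $\C_K$ listed in Section \ref{cap} together with Proposition \ref{chcgeq}.
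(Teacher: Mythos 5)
Your argument is correct and is essentially the proof the paper intends: the corollary is stated right after Proposition \ref{chcgeq} precisely because it follows by combining that equivalence with the two-sided bound (H), the kernel-monotonicity property (iii), the monotonicity of $\G_a$ in the exponent, and the elementary rescaling $\C_{\alpha K}=\alpha^{-1}\C_K$ to absorb the constant $\Lambda$. No gaps; your constant $c=c_1\Lambda$ depends only on the structural quantities in $|\H|$, as required.
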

The following proposition, with some interest in its own, it will be important in Section \ref{IBCCond}.

\begin{proposition}\label{cAtau}
Let $A$ be a compact subset of $\R^N$, $\tau\in]T_1,T_2[$, and $a>0$. There exists a positive constant $c$ independent of $A$ and $\tau$ such that
\begin{itemize}
\item[(i)] $\qquad\left|A\right|\leq c\,\C_a(A\times\{\tau\})$;
\item[(ii)] $\qquad\C_\H(A\times\{\tau\})\leq c\left|A\right|$.
\end{itemize}
\end{proposition}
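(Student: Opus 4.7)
The plan is to establish both inequalities by combining the two-sided Gaussian estimate (H) with the doubling condition (D2) and the properties of capacities and balayage already recalled in this section. Throughout, write $F = A \times \{\tau\}$.

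For part (i) I would produce an explicit admissible measure for $\C_a(F)$, namely $\mu_\alpha := \alpha\, \chi_A(\xi)\, d\xi \otimes \delta_\tau$ for a positive constant $\alpha$ to be chosen. The potential $\G_a \ast \mu_\alpha(x, t)$ vanishes for $t \leq \tau$ by definition; for $t > \tau$, setting $r = \sqrt{t - \tau}$ and splitting $\R^N$ into the annuli $\{\xi : kr \leq d(x, \xi) < (k+1)r\}$ ($k \in \N$), the iterated form of (D2) gives $|B(x, (k+1)r)| \leq c_d (k+1)^Q |B(x, r)|$, hence
\[
\int_{\R^N} \G_a(x, t, \xi, \tau)\, d\xi \leq c_d \sum_{k=0}^{\infty} (k+1)^Q e^{-a k^2} =: C_0
\]
uniformly in $(x, t, \tau)$. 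Taking $\alpha = 1/C_0$ makes $\mu_\alpha$ admissible in the sense of the definition of $\C_a$, and so $\C_a(F) \geq \mu_\alpha(F) = |A|/C_0$.

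For part (ii) I would work directly with the $\H$-balayage measure $\mu_F$ of $F$: by definition $\mu_F(F) = \C_\H(F)$, and by \cite[Proposition 8.3]{LU} (already quoted in this section) one has $\Gamma \ast \mu_F \leq 1$ on $S$. Fix a small $\delta > 0$ with $\tau + \delta^2 < T_2$, set $A_\delta := \{x \in \R^N : d(x, A) < \delta\}$, and pick $t$ with $0 < \sqrt{t - \tau} < \delta$. For every $\xi \in A$ the ball $B(\xi, \sqrt{t-\tau})$ is contained in $A_\delta$, and for $x \in B(\xi, \sqrt{t-\tau})$ the doubling condition gives $|B(x, \sqrt{t-\tau})| \leq c_d |B(\xi, \sqrt{t-\tau})|$; combined with the lower Gaussian bound in (H) this yields
\[
\int_{A_\delta} \Gamma(x, t, \xi, \tau)\, dx \geq \int_{B(\xi, \sqrt{t-\tau})} \frac{e^{-b_0}}{\Lambda c_d |B(\xi, \sqrt{t-\tau})|}\, dx = \frac{e^{-b_0}}{\Lambda c_d} =: c_1.
\]
Fubini together with $\Gamma \ast \mu_F \leq 1$ then force
\[
|A_\delta| \geq \int_{A_\delta} \Gamma \ast \mu_F(x, t)\, dx \geq c_1 \mu_F(F) = c_1 \C_\H(F),
\]
and letting $\delta \to 0^+$, continuity of Lebesgue measure (using compactness of $A$) gives $|A_\delta| \searrow |A|$, so $\C_\H(F) \leq |A|/c_1$.

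I do not foresee any deep difficulty here: both parts rest on the standard interplay between the Gaussian estimates and doubling, which lets one swap $|B(x, r)|$ and $|B(\xi, r)|$ whenever $d(x, \xi) \lesssim r$. The only technical points worth watching are the uniformity in $(x, t, \tau)$ of $\int_{\R^N} \G_a\, d\xi$ in part (i), handled by the dyadic shell decomposition, and the passage $\delta \to 0^+$ in part (ii), which is routine for compact $A$.
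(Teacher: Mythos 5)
Your proof is correct, and while part (i) matches the paper, part (ii) takes a genuinely different route. For (i) the paper also tests $\C_a(A\times\{\tau\})$ with the Lebesgue measure on the slice, simply quoting \cite[Proposition 2.4]{LU} for the uniform bound $\int_{\RN}\G_a(x,t,\xi,\tau)\,d\xi\le\beta$ that you re-derive by the dyadic-shell/doubling computation, so the substance is the same. For (ii), the paper takes an arbitrary $\mu\in\mathcal{M}^+(A\times\{\tau\})$ with $\Gamma\ast\mu\le1$, introduces the potential $\Gamma\ast\nu$ of the Lebesgue measure $\nu$ on $O\times\{\tau\}$ for a bounded open $O\supset A$, proves via the initial-datum hypothesis \eqref{H1} that $\Gamma\ast\nu\to1$ at every point of $O\times\{\tau\}$, and then applies the comparison principle on $\RN\times]\tau,T_2[$ (together with decay at infinity and the vanishing of $\Gamma\ast\mu$ for $t\le\tau$ off $A\times\{\tau\}$) to get $\Gamma\ast\mu\le\Gamma\ast\nu$ outside $A\times\{\tau\}$; the mass-comparison Lemma \ref{commis} then gives $\C_\Gamma(A\times\{\tau\})\le|A|$, and (ii) follows from Proposition \ref{chcgeq}. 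You instead test the inequality $\Gamma\ast\mu_F\le1$ (the same \cite[Proposition 8.3]{LU} fact the paper invokes elsewhere in this section) directly against Lebesgue measure on the enlarged slice $A_\delta$ at a time just after $\tau$, and use the lower Gaussian bound in (H) plus doubling to see that each unit of $\mu_F$-mass contributes at least $e^{-b_0}/(\Lambda c_d)$ to that spatial integral; this is in effect a one-slice variant of the idea behind Lemma \ref{commis}, and it avoids both \eqref{H1} and the comparison principle. What the paper's longer detour buys is the sharper intermediate inequality $\C_\Gamma(A\times\{\tau\})\le|A|$ with constant $1$, whereas your argument yields (ii) with a structural constant, which is all the statement requires. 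Two small points worth making explicit in your write-up: $A_\delta$ has finite Lebesgue measure for small $\delta$ because, by (D1), $d$-neighborhoods of the compact set $A$ are Euclidean-bounded, so the passage $|A_\delta|\searrow|A|$ is justified; and since the Gaussian is normalized by $|B(x,\sqrt{t-\tau})|$, the ball centered at the first variable, the doubling swap $|B(x,\sqrt{t-\tau})|\le c_d\,|B(\xi,\sqrt{t-\tau})|$ you perform is exactly the needed step.
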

\begin{proof}
Let us start by proving the first inequality. Let $\nu$ be the Lebesgue measure supported on $A\times\{\tau\}$. For any $(x,t)\in S$, we have
$$\G_a\ast\nu(x,t)=\int_A{\G_a(x,t,\xi,\tau)\,d\xi}\leq\int_{\R^N}{\G_a(x,t,\xi,\tau)\,d\xi}\leq\beta$$
for some positive constant $\beta$ (see \cite[Proposition 2.4]{LU}). Hence $G_a\ast\frac{\nu}{\beta}\leq 1$ in $S$, so that
$$\C_a(A\times\{\tau\})\geq\frac{\nu}{\beta}(A\times\{\tau\})=\frac{\left|A\right|}{\beta}.$$
Let us now prove the inequality in (ii). Let us consider a bounded open set $O\subset\R^N$ containing $A$, and denote by $\nu$ the Lebesgue measure supported on $O\times\{\tau\}$. We claim the following:
\begin{equation}\label{colaim}
\lim_{\substack{(x,t)\rightarrow(x_0,\tau)\\t>\tau}}{\Gamma\ast\nu(x,t)}=1\qquad\mbox{for every }x_0\in O.
\end{equation}
To prove this, let $x_0\in O$ be arbitrarily fixed. Let us consider two compactly supported continuous functions $\phi_1,\phi_2$ on $\R^N$ with $\phi_1(x_0)=\phi_2(x_0)=1$ such that $0\leq\phi_1\leq\chi_O\leq\phi_2\leq1$. Here $\chi_O$ denotes the characteristic function of $O$. For any $(x,t)\in\R^N\times]\tau,T_2[$ we have
$$\int_{\R^N}{\Gamma(x,t,\xi,\tau)\phi_1(\xi)\,d\xi}\leq\Gamma\ast\nu(x,t)=\int_{O}{\Gamma(x,t,\xi,\tau)\,d\xi}\leq\int_{\R^N}{\Gamma(x,t,\xi,\tau)\phi_2(\xi)\,d\xi}.$$
The hypothesis \eqref{H1} gives then \eqref{colaim}. Once we have proved the claim, let us pick $\mu\in\mathcal{M}^+(A\times\{\tau\})$ such that $\Gamma\ast\mu\leq 1$ in $S$. Then $u=\Gamma\ast\nu-\Gamma\ast\mu$ is $\H$-harmonic in $\R^N\times]\tau,T_2[$, and it satisfies $\liminf_{z\rightarrow(x_0,\tau)}{u(z)}\geq 0$ for all $x_0\in O$. This inequality also holds at any point $x_0\notin O$ since in this case we have
$$0\leq\limsup_{z\rightarrow(x_0,\tau)}{\Gamma\ast\mu(z)}\leq\Lambda\limsup_{z\rightarrow(x_0,\tau)}{\G_{a_0}\ast\mu(z)}=0$$
being $\sup_{\xi\in A}{\G_{a_0}(z,\xi,\tau)}\rightarrow0$ as $z\rightarrow(x_0,\tau)$. Moreover $u(z)\rightarrow 0$ as $z\rightarrow\infty$ (see \cite[Proposition 8.1]{LU}). Then, the comparison principle implies $u\geq 0$ in $\R^N\times]\tau,T_2[$, i.e.
$$\Gamma\ast\mu\leq \Gamma\ast\nu\qquad\mbox{in }\R^N\times]\tau,T_2[.$$
This inequality extends to $S\smallsetminus\left(A\times\{\tau\}\right)$ since $\Gamma\ast\mu=0$ in $\left(\R^N\times]T_1,\tau]\right)\smallsetminus\left(A\times\{\tau\}\right)$. Then, by Lemma \ref{commis} we get
$$\mu\left(A\times\{\tau\}\right)\leq \nu\left(A\times\{\tau\}\right)=\left|A\right|.$$
Since this holds true for every $\mu\in\mathcal{M}^+(A\times\{\tau\})$ with $\Gamma\ast\mu\leq 1$, we finally obtain
$$\C_\Gamma\left(A\times\{\tau\}\right)\leq \left|A\right|.$$
The desired inequality then follows from Proposition \ref{chcgeq}.
\end{proof}

\begin{corollary}
For every compact set $A\subset\R^N$, and $\tau\in]T_1,T_2[$, we have
$$\frac{1}{C}\left|A\right|\leq\C_\H(A\times\{\tau\})\leq C\left|A\right|,$$
with $C$ independent of $A$ and $\tau$.
\end{corollary}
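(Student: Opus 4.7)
The statement is essentially a packaging of the two-sided comparison between Lebesgue measure and $\G_a$-capacity established in Proposition \ref{cAtau}, combined with the equivalence between $\C_a$ (or $\C_\Gamma$) and the balayage capacity $\C_\H$ given by Corollary \ref{aHb} (equivalently Proposition \ref{chcgeq}). So my plan is to derive both inequalities purely by chaining these previously proved facts, and to check that no constant depends on either $A$ or $\tau$.

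For the upper bound $\C_\H(A\times\{\tau\})\le C|A|$, there is nothing to do: this is exactly assertion \emph{(ii)} of Proposition \ref{cAtau}, whose constant is structural (independent of $A$ and $\tau$).

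For the lower bound $|A|\le C\,\C_\H(A\times\{\tau\})$, I would pick any fixed $a\in(0,a_0]$ (say $a=a_0$) and combine two estimates. First, by assertion \emph{(i)} of Proposition \ref{cAtau}, one has $|A|\le c_1\,\C_a(A\times\{\tau\})$ with $c_1$ structural. Second, Corollary \ref{aHb} provides, for this choice of $a$, a constant $c_2$ such that $\C_a(F)\le c_2\,\C_\H(F)$ for every compact $F\subset S$; apply this to $F=A\times\{\tau\}$. Composing the two yields $|A|\le c_1 c_2\,\C_\H(A\times\{\tau\})$, which is the desired inequality with $C=c_1c_2$, again independent of $A$ and $\tau$ since both constants $c_1,c_2$ depend only on the structural data ($a_0$, $b_0$, $\Lambda$, doubling constant, i.e.\ on $|\H|$).

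There is really no obstacle: the two inequalities of Proposition \ref{cAtau} already exhibit a two-sided comparison between $|A|$ and $\G_a$-capacity on one side and $\H$-capacity on the other, while Corollary \ref{aHb} bridges the gap between $\C_a$ and $\C_\H$. The only thing worth noting is that the corollary would \emph{not} follow if we only had the weaker inequality $\C_\H(F)\le c\,\C_b(F)$ in the other direction (which is all that Corollary \ref{aHb} gives on the $b$-side); it is the lower estimate $\C_\H(F)\ge \tfrac1c \C_a(F)$ that is really being used here to transfer the bound $|A|\le c\,\C_a(A\times\{\tau\})$ over to the balayage capacity.
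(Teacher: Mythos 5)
Your argument is correct and is exactly the paper's proof: the upper bound is Proposition \ref{cAtau}(ii), and the lower bound chains Proposition \ref{cAtau}(i) with the estimate $\C_a(F)\le c\,\C_\H(F)$ from Corollary \ref{aHb}, all constants being structural. Nothing is missing.
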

\begin{proof}
We have just to put together Corollary \ref{aHb} and Proposition \ref{cAtau}.
\end{proof}

\section{Green estimates and crucial lemmas}\label{GL}

Let $z_0\in S$ be fixed. For simplicity of notations, we shall assume $z_0=(0,0)$. For any $0<\delta <1$, by the results proved in \cite[Section 6]{LU} we know that
\begin{itemize}
\item[\,]for every $0<r$ there exists an open set $D(r)$ satisfying $$B(0,\delta r)\subseteq D(r) \subseteq B(0,r)$$ with the property that the parabolic boundary points of the cylindrical domains $$D(r)\times ]a,b[, \qquad \mbox{for }T_1<a<b<T_2,$$ are $\H$-regular.
\end{itemize}

In what follows we are going to fix $\delta=\frac{1}{2}$. For every $M\geq e$, we let
$$C(M,r)=D\left(\sqrt{r\log{(M)}}\right)\times]-r,r[$$
and we denote by $G(M,r;z,\zeta)$ the Green function of $C(M,r)$ (see \cite[Section 7]{LU}). Then, for any $\zeta \in C(M,r)$,
$$G(M,r;z,\zeta)\rightarrow 0\,\,\, \mbox{as }z\rightarrow z_0,\qquad\forall\, z_0\in\de_p C(M,r).$$
We also know that $(z,\zeta)\mapsto G(M,r;z,\zeta)$ is nonnegative, lower semicontinuous, and, for any fixed $\zeta\in C(M,r)$, the function $z\mapsto G(M,r;z,\zeta)$ is smooth and $\H$-parabolic in $C(M,r)\smallsetminus\{\zeta\}$. Moreover we have
$$G(M,r;z,\zeta)>0\qquad\mbox{if }z=(z,t),\,\zeta=(\xi,\tau)\in C(M,r),\,t>\tau.$$
Therefore, with the terminology introduced in Section \ref{cap}, $G(M,r;\cdot,\cdot)$ is a kernel on $C(M,r)$. For every compact set $F\subset C(M,r)$ we will denote by $\C(M,r;F)$ the $G(M,r;\cdot,\cdot)$-capacity of $F$. Since $\Gamma$ is a kernel on $S$, then $\C_\Gamma(F)$ is well defined for any compact $F\subset S$. By $G\leq\Gamma$ we trivially have
$$\C_\Gamma(F)\leq \C(M,r;F)\qquad \forall F\subset C(M,r).$$
In what follows we shall use the following proposition.
\begin{proposition}\label{propeqpot}
Let $v$ be a $G(M,r;\cdot,\cdot)$-equilibrium potential of a compact set $F\subset C(M,r)$. Then
\begin{enumerate}
\item[(i)] $v\leq 1$ in $C(M,r)$;
\item[(ii)] $v$ is $\H$-parabolic in $C(M,r)\smallsetminus F$;
\item[(iii)] $v(z)\rightarrow 0$ as $z\rightarrow\zeta$ for every $\zeta\in \de_p C(M,r)$.
\end{enumerate}
\end{proposition}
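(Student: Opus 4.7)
The plan is to handle the three items separately, with (i) being immediate from the axiomatic definition of equilibrium measure, while (ii)--(iii) rest on a single observation: although the only bound on $G(M,r;\cdot,\cdot)$ itself is lower semicontinuity, the estimate $G\le\Gamma$ recalled just above the statement supplies a uniform upper bound on any compact off--diagonal set, which is exactly what is needed to invoke Fubini and dominated convergence.

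Part (i) is a direct instance of item (iv) in Section~\ref{cap} applied to the kernel $G(M,r;\cdot,\cdot)$ on the locally compact space $C(M,r)$: the equilibrium measure $\mu\in\mathcal{M}^+(F)$ produced there satisfies $G(M,r;\cdot,\cdot)\ast\mu\le 1$ throughout the cylinder, and $v$ is by definition this very potential.

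For part (ii), I would fix $z_1\in C(M,r)\smallsetminus F$, pick an open neighborhood $V$ of $z_1$ with $\overline{V}\cap F=\emptyset$ and $\overline{V}\subset C(M,r)$, and test $v$ against an arbitrary $\varphi\in C^\infty_c(V)$. The inequality $G(M,r;z,\zeta)\le\Gamma(z,\zeta)$, combined with the smoothness (hence continuity) of $\Gamma$ off the diagonal, provides a constant $C$ bounding $G$ on the compact off-diagonal set $\overline{V}\times F$; since $\mu(F)<\infty$, Fubini's theorem legitimates
$$\int v(z)(\H^{*}\varphi)(z)\,dz=\int_F\Big(\int G(M,r;z,\zeta)(\H^{*}\varphi)(z)\,dz\Big)\,d\mu(\zeta).$$
Each inner integral vanishes, because $G(M,r;\cdot,\zeta)$ is $\H$-parabolic in $C(M,r)\smallsetminus\{\zeta\}\supset V$ for every $\zeta\in F$. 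Thus $\H v=0$ in $V$ in the distributional sense; the hypoellipticity of $\H$ then upgrades $v$ to a smooth $\H$-parabolic function on $V$, whence on all of $C(M,r)\smallsetminus F$.

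For part (iii), I would fix $\zeta_0\in\de_p C(M,r)$ and a sequence $z_n\to\zeta_0$ in $C(M,r)$. Since $F$ is compact and contained in the open set $C(M,r)$, the point $\zeta_0$ lies outside $F$, so one may choose a neighborhood $U$ of $\zeta_0$ with $\overline{U}\cap F=\emptyset$ and assume $z_n\in U$ eventually. The bound $G(M,r;z_n,\zeta)\le\Gamma(z_n,\zeta)$ together with the continuity of $\Gamma$ on the compact off-diagonal set $\overline{U}\times F$ furnishes a constant majorant $C$ independent of $n$ and of $\zeta\in F$; pointwise, $G(M,r;z_n,\zeta)\to 0$ for every $\zeta\in F$ by the boundary property of $G$ recalled before the statement. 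Dominated convergence, with dominating function $C$ against the finite measure $\mu$, then yields $v(z_n)\to 0$. The only delicate point, common to (ii) and (iii), is securing a $\zeta$-uniform upper bound on $G(M,r;\cdot,\zeta)$ away from $F$; the comparison $G\le\Gamma$ together with the standard regularity of $\Gamma$ off the diagonal handles it completely.
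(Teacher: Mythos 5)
Your argument is correct and takes essentially the same route as the paper, which simply writes $v(z)=\int_F G(M,r;z,\zeta)\,d\mu(\zeta)$ and declares that the assertions follow from the stated properties of the Green kernel (with a pointer to [LU, Proposition 8.3]); you are just supplying the Fubini/hypoellipticity and dominated-convergence details behind that one-line deduction. The only refinement worth recording in part (ii) is that the same domination $G\le\Gamma$ on compact off-diagonal sets also gives continuity of $v$ on $C(M,r)\smallsetminus F$, so that $v$ coincides everywhere (not merely almost everywhere) with the smooth $\H$-parabolic representative produced by hypoellipticity.
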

\begin{proof}
By definition of equilibrium potential, there exists a nonnegative Radon measure $\nu$ supported in $F$ such that $v(z)=\int_F{G(M,r;z,\zeta)\,d\nu(\zeta)}$. Then the assertions follow by the properties of the Green function (see also \cite[Proposition 8.3]{LU}).
\end{proof}

We are interested in Gaussian bounds also for the Green kernels $G$. Since $G\leq\Gamma$, of course we have
\begin{equation}\label{uppbo}
G(M,r;z,\zeta)\le\Lambda\G_{a_0}(z,\zeta),\quad \forall z,\zeta\in C(M,r).
\end{equation}
Furthermore, in \cite[Section 7]{LU} it was obtained a Gaussian bound even from below. Here we will actually need something more precise.

\begin{lemma}\label{bzeroproprio}
There exists $\l_0\leq\frac{\delta}{4}=\frac{1}{8}$ depending just on $|\H|$ such that
\begin{equation}\label{lowbo}
G(M,r;z,\zeta)\geq\frac{1}{2}\Gamma(z,\zeta)\geq\frac{1}{2\Lambda}\G_{b_0}(z,\zeta),\quad \forall z,\zeta\in C(M,\lambda_0 r).
\end{equation}
\end{lemma}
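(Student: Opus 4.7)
The plan is to write $G(M,r;z,\zeta)=\Gamma(z,\zeta)-h_\zeta(z)$, where $h_\zeta(z):=H^{C(M,r)}_{\Gamma(\cdot,\zeta)}(z)$ is the Perron--Wiener solution to the Dirichlet problem in $C(M,r)$ with boundary datum $\Gamma(\cdot,\zeta)$. Since the parabolic boundary points of $C(M,r)$ are $\H$-regular (by the property of the sets $D(r)$ recalled at the start of the section) and $\Gamma(\cdot,\zeta)$ is continuous on $\de_p C(M,r)$ when $\zeta$ lies in the interior, $h_\zeta$ is $\H$-parabolic in $C(M,r)$ and the identity $G=\Gamma-h_\zeta$ holds. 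Consequently \eqref{lowbo} is equivalent to $h_\zeta(z)\leq \tfrac{1}{2}\Gamma(z,\zeta)$ for all $z,\zeta\in C(M,\lambda_0 r)$, and only the case $t>\tau$ requires work (both sides vanish when $t\leq \tau$).

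First I would show that $\Gamma(\cdot,\zeta)$ is small on $\de_p C(M,r)$. On the bottom $\{s=-r\}$, one has $s\leq -r<-\lambda_0 r\leq \tau$, hence $\Gamma((y,-r),\zeta)=0$. On the lateral portion $\de D(\sqrt{r\lgM})\times[-r,r]$, the inclusion $B(0,\tfrac{1}{2}\sqrt{r\lgM})\subseteq D(\sqrt{r\lgM})$, the segment property (D3), and the bound $d(0,\xi)\leq \sqrt{\lambda_0 r\lgM}$ yield
\[
d(y,\xi)\geq \Bigl(\tfrac{1}{2}-\sqrt{\lambda_0}\Bigr)\sqrt{r\lgM}\geq \tfrac{1}{4}\sqrt{r\lgM}
\]
as soon as $\lambda_0\leq 1/16$, so that the upper bound in (\hyperref[bounds]{H}) gives, for $w=(y,s)$ with $s>\tau$,
\[
\Gamma(w,\zeta)\leq \frac{\Lambda}{|B(y,\sqrt{s-\tau})|}\,M^{-a_0 r/(16(s-\tau))}.
\]

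The core step is to convert this super-exponential smallness in $M$ on $\de_p C(M,r)$ into the pointwise bound $h_\zeta\leq \tfrac{1}{2}\Gamma(\cdot,\zeta)$ on the inner cylinder. A naive application of the maximum principle (Proposition 3.10 of \cite{LU}) would only yield $h_\zeta(z)\leq \sup_{\de_p}\Gamma(\cdot,\zeta)$, which is too crude in the regime where $\Gamma(z,\zeta)$ itself is exponentially small (the worst case being $d(x,\xi)\sim\sqrt{\lambda_0 r\lgM}$ with $t-\tau\ll r$). I would therefore bootstrap from the Gaussian lower bound for $G$ already established in \cite[Section~7]{LU}, combining it with the doubling property (D2) -- used to transfer the volume factor $|B(y,\sqrt{s-\tau})|^{-1}$ on the lateral boundary to the factor $|B(x,\sqrt{t-\tau})|^{-1}$ in the interior -- and with the lower Gaussian bound
\[
\Gamma(z,\zeta)\geq \frac{1}{\Lambda|B(x,\sqrt{t-\tau})|}\,M^{-4b_0\lambda_0 r/(t-\tau)}
\]
coming from (\hyperref[bounds]{H}). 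Shrinking $\lambda_0=\lambda_0(|\H|)$ until the boundary gain $M^{-a_0 r/(16(s-\tau))}$ absorbs, uniformly in $(s-\tau)\in(0,2r]$ and $(t-\tau)\in(0,2\lambda_0 r]$, both the polynomial doubling loss and the exponential cost $M^{4b_0\lambda_0 r/(t-\tau)}$ delivers $h_\zeta\leq \tfrac{1}{2}\Gamma(\cdot,\zeta)$; the second inequality in \eqref{lowbo} is then immediate from (\hyperref[bounds]{H}).

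The hardest part will be this uniform matching across two unrelated time-scales $(s-\tau)$ and $(t-\tau)$: the Gaussian exponents $a_0\leq b_0$ in (\hyperref[bounds]{H}) differ, and the volume prefactors are centered at points $y$ and $x$ separated by a distance of order $\sqrt{r\lgM}$. It is precisely the need to dominate a polynomial volume-ratio factor and an exponential $M^{C(|\H|)\lambda_0/(t-\tau)}$ by the super-exponential boundary decay $M^{-\mathrm{const}/(s-\tau)}$ that forces the quantitative choice $\lambda_0=\lambda_0(|\H|)$ through $a_0,b_0,\Lambda$, and $c_d$.
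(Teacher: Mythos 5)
Your decomposition $G(M,r;\cdot,\zeta)=\Gamma(\cdot,\zeta)-h_\zeta$ and your smallness estimate for $\Gamma(\cdot,\zeta)$ on the lateral boundary coincide with the paper's starting point, and you are right that comparing $\sup_{\de_p C(M,r)}\Gamma(\cdot,\zeta)$ with the interior values of $\Gamma(\cdot,\zeta)$ is too crude. But the step you leave open is not merely ``the hardest part'': as formulated it cannot be carried out. You propose to absorb the boundary gain $M^{-a_0 r/(16(s-\tau))}$ \emph{uniformly in} $(s-\tau)\in(0,2r]$ into the interior cost $M^{-c\,b_0\lambda_0 r/(t-\tau)}$ by shrinking $\lambda_0$. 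For lateral points with $s-\tau$ comparable to $r$ the boundary datum is only of size $M^{-a_0/32}$ (a fixed power of $M$) times a volume factor, whereas for interior points with $t-\tau\to0$ and $d(x,\xi)\sim\sqrt{\lambda_0 r\lgM}$ the lower bound for $\Gamma(z,\zeta)$ decays like $M^{-c\,b_0\lambda_0 r/(t-\tau)}$, i.e.\ faster than any fixed power of $M$; no choice of $\lambda_0=\lambda_0(|\H|)$ reverses this, and the appeal to ``bootstrapping from the Gaussian lower bound of \cite[Section 7]{LU}'' does not supply the missing mechanism.

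The idea your plan lacks is the causal structure of the representation of $h_\zeta$: for $z=(x,t)$ the representing (harmonic) measure $\mu_z$ is supported on $\de D(\sqrt{r\lgM})\times[\tau,t]$ and has total mass at most $1$, so only boundary times $s\le t$ contribute, whence $s-\tau\le t-\tau\le 2\lambda_0 r$. This puts the boundary Gaussian decay and the interior Gaussian cost on the \emph{same} time scale, and the paper exploits it by estimating the ratio $\Gamma(y,s,\zeta)/\Gamma(z,\zeta)$ only for lateral points with $\tau\le s\le t$: combining the two-sided bound (\hyperref[bounds]{H}) (in the sharpened form of \cite[Proposition 2.2]{LU}) with the doubling property to compare $|B(x,\sqrt{t-\tau})|$ and $|B(\xi,\sqrt{s-\tau})|$, and absorbing the resulting power $\left(\frac{\sqrt{t-\tau}+d(x,\xi)}{d(y,\xi)}\right)^{Q}$ into half of the Gaussian decay, one arrives at $\frac{\Gamma(y,s,\zeta)}{\Gamma(z,\zeta)}\le C\lambda_0^{Q/2}\exp\left(\left(4\lambda_0 b_0-\frac{a_0\delta}{8}\right)\frac{r\lgM}{t-\tau}\right)$, which is $\le\frac12$ once $\lambda_0\le\frac{a_0\delta}{32 b_0}$ and $\lambda_0$ is structurally small. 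Since $\mu_z$ has mass at most $1$, integrating this ratio bound gives $h_\zeta\le\frac12\Gamma(\cdot,\zeta)$ at once, and the second inequality in \eqref{lowbo} follows from (\hyperref[bounds]{H}); the pointwise matching over the whole lateral boundary that you were trying to force is never needed.
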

\begin{proof} We will modify the arguments in \cite[Lemma 4.2 and Theorem 4.3]{BU}. Let us fix $z=(x,t),\zeta=(\xi,\tau) \in C(M,\lambda_0 r)$ with $t>\tau$, with $\l_0$ to be determined. We have
\begin{eqnarray*}
G(M,r;z,\zeta)&=&\Gamma(z,\zeta)-\int_{\de D(\sqrt{r\log{(M)}})\times[\tau,t]}{\Gamma(y,s,\zeta)\,d\mu_z(y,s)}\\
&=&\Gamma(z,\zeta)\left(1-\int_{\de D(\sqrt{r\log{(M)}})\times[\tau,t]}{\frac{\Gamma(y,s,\zeta)}{\Gamma(z,\zeta)}d\mu_z(y,s)}\right)
\end{eqnarray*}
for some nonnegative Radon measure $\mu_z$, which vanishes if $s>t$. Since $\mu_z(\de D(\sqrt{r\log{(M)}})\times[\tau,t])\leq 1$, it is enough to bound uniformly from above the ratio $\frac{\Gamma(y,s,\zeta)}{\Gamma(z,\zeta)}$ with something going to $0$ as $\l_0\rightarrow 0$. To this aim, by \cite[Proposition 2.2]{LU} and the doubling property we get
\begin{eqnarray*}
\frac{\Gamma(y,s,\zeta)}{\Gamma(z,\zeta)}&\leq& C\frac{\left|B(x,\sqrt{t-\tau})\right|}{\left|B(\xi,\sqrt{s-\tau})\right|}\exp{\left(-\frac{3a_0}{4}\frac{d^2(y,\xi)}{s-\tau}\right)}\exp{\left(b_0\frac{d^2(x,\xi)}{t-\tau}\right)}\\
&\leq&C\frac{\left|B(\xi,\sqrt{t-\tau}+d(x,\xi))\right|}{\left|B(\xi,\sqrt{t-\tau})\right|}\left(\frac{t-\tau}{s-\tau}\right)^{\frac{Q}{2}}\exp{\left(-\frac{3a_0}{4}\frac{d^2(y,\xi)}{s-\tau}\right)}\exp{\left(b_0\frac{d^2(x,\xi)}{t-\tau}\right)}\\
&\leq&C\left(\frac{\sqrt{t-\tau}+d(x,\xi)}{\sqrt{s-\tau}}\right)^{Q}\exp{\left(-\frac{a_0}{4}\frac{d^2(y,\xi)}{s-\tau}\right)}\exp{\left(b_0\frac{d^2(x,\xi)}{t-\tau}-\frac{a_0}{2}\frac{d^2(y,\xi)}{s-\tau}\right)},
\end{eqnarray*}
where we allowed the structural positive constant $C$ to change at every step. If $M=\max_{t>0}{t^{-\frac{Q}{2}}e^{-\frac{1}{t}}}$, we thus have
$$\frac{\Gamma(y,s,\zeta)}{\Gamma(z,\zeta)}\leq C\left(\frac{4}{a_0}\right)^{\frac{Q}{2}}M\left(\frac{\sqrt{t-\tau}+d(x,\xi)}{d(y,\xi)}\right)^{Q}\exp{\left(b_0\frac{d^2(x,\xi)}{t-\tau}-\frac{a_0}{2}\frac{d^2(y,\xi)}{s-\tau}\right)}.$$
By exploiting that $d(y,\xi)\geq d(y,0)-d(\xi,0)\geq \frac{1}{2}\sqrt{\delta r\log{M}}$, $s-\tau\leq t-\tau\leq 2\lambda_0 r$, $d(x,\xi)\leq 2\sqrt{\l_0r\log{M}}$, and $\log{M}\geq 1$ we obtain
$$\frac{\Gamma(y,s,\zeta)}{\Gamma(z,\zeta)}\leq C\l_0^{\frac{Q}{2}}\exp{\left(\left(4\l_0b_0-\frac{a_0}{8}\delta\right)\frac{r\log{M}}{t-\tau}\right)}$$
for a suitable positive structural constant $C$. Hence, if $\l_0\leq\frac{a_0}{b_0}\frac{\delta}{32}$, we have
$$\frac{\Gamma(y,s,\zeta)}{\Gamma(z,\zeta)}\leq C\l_0^{\frac{Q}{2}}.$$
Therefore, there exists a positive structural $\l_0$ such that $\frac{\Gamma(y,s,\zeta)}{\Gamma(z,\zeta)}\leq\frac{1}{2}$ for every $(y,s)\in\de D(\sqrt{r\log{(M)}})\times[\tau,t]$, and for every $z,\zeta\in C(M,\lambda_0 r)$ with $t>\tau$. This gives the assertion.
\end{proof}

In what follows we fix $0<\l\leq\l_0<\min{\{\frac{1}{e},\delta\}}$, with $\l_0$ as in $\eqref{lowbo}$. We are now going to prove two lemmas which will be very crucial in the sequel.

\begin{lemma}\label{CL1} Let $M\geq e$ and $0<r$ be fixed. Let $F$ be a compact set contained in
$$C(M,\l r)\cap\{(\xi,\tau)\in\R^{N+1}\,:\,\tau\leq -\l^2r\}.$$
Let us also denote by $v$ a $G(M,r;\cdot,\cdot)$-equilibrium potential of $F$ related to $C(M,r)$. Then there exists $p_0>0$ depending just on $|\H|,\l$ such that, if we denote by $p$ the first integer greater or equal than $p_0\log(M)$, we have
$$v(z)\geq\frac{1}{2}v(0)\qquad\forall\,z\in C(M,\l^pr).$$
\end{lemma}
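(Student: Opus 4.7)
The plan is to prove the pointwise comparison
$$G(M,r;z,\zeta)\;\geq\;\tfrac{1}{2}\,G(M,r;(0,0),\zeta)\qquad\forall\,z\in C(M,\lambda^p r),\ \zeta\in F,$$
and then integrate against a $G(M,r;\cdot,\cdot)$-equilibrium measure $\nu$ of $F$ (whose existence is guaranteed by the framework recalled in Section \ref{cap}) to obtain $v(z)\geq\tfrac{1}{2}v(0)$.

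To establish the pointwise inequality, I would combine the lower Gaussian bound of Lemma \ref{bzeroproprio}, $G(M,r;z,\zeta)\geq (2\Lambda)^{-1}\G_{b_0}(z,\zeta)$---which applies because $\lambda\leq\lambda_0$ forces both $z$ and $\zeta$ into $C(M,\lambda_0 r)$---with the upper bound $G(M,r;(0,0),\zeta)\leq\Lambda\,\G_{a_0}((0,0),\zeta)$ from \eqref{uppbo}. The desired inequality then reduces to showing
$$\frac{|B(0,\sqrt{|\tau|})|}{|B(x,\sqrt{t-\tau})|}\exp\!\Bigl(a_0\tfrac{d(0,\xi)^2}{|\tau|}-b_0\tfrac{d(x,\xi)^2}{t-\tau}\Bigr)\;\geq\;2\Lambda^2$$
for every $z=(x,t)\in C(M,\lambda^p r)$ and $\zeta=(\xi,\tau)\in F$. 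The defining constraints of these sets give $|t|\leq\lambda^{p-2}|\tau|$, $d(0,x)\leq\sqrt{\lambda^{p-2}\log M}\cdot\sqrt{|\tau|}$, and $d(0,\xi)^2/|\tau|\leq \log M/\lambda$. The ball ratio is bounded below by a structural constant via the doubling property (D2), while the triangle inequality lets one expand $d(x,\xi)^2/(t-\tau)$ around $d(0,\xi)^2/|\tau|$ with cross-error terms of order $\sqrt{\lambda^{p-2}\log M}\,(1+\sqrt{d(0,\xi)^2/|\tau|})$.

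The main obstacle is the \emph{Gaussian gap} $b_0-a_0\geq 0$: the expansion produces a residual $-(b_0-a_0)\,d(0,\xi)^2/|\tau|$ which, owing to the worst-case bound $d(0,\xi)^2/|\tau|\leq \log M/\lambda$ on $F$, may reach $-(b_0-a_0)\log M/\lambda$ in the exponent and is not tamed by any polynomial choice of $p$ in $\log M$. To handle it, I would partition the defining integral of $v(z)$ according to the dichotomy $F_K=\{\zeta\in F:d(0,\xi)^2/|\tau|\leq K\}$ versus $F\setminus F_K$: on $F_K$ the pointwise comparison above is quantitative, with a constant depending only on $K$ and on the structural data; on $F\setminus F_K$ the sharp estimate $G(M,r;(0,0),\zeta)\leq \Lambda\,|B(0,\sqrt{|\tau|})|^{-1}\,e^{-a_0 K}$, combined with the capacity bounds of Section \ref{cap} (in particular Proposition \ref{cAtau} and Corollary \ref{aHb}) applied to $\nu$, renders the contribution to $v(0)$ an $O(e^{-a_0 K})$ fraction of $v(0)$ once $K$ is taken large in terms of $|\H|$ and $\lambda$. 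Calibrating $K$ to neutralize the gap and then choosing $p_0$ so that $\lambda^{p_0\log M}\log M$ is sufficiently small for $p\geq p_0\log M$ forces the cross-error terms on $F_K$ to be harmless and produces the claimed bound $v(z)\geq\tfrac{1}{2}v(0)$ uniformly on $C(M,\lambda^p r)$.
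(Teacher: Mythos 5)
There is a genuine gap: the reduction to a pointwise kernel comparison $G(M,r;z,\zeta)\geq\tfrac12\,G(M,r;0,\zeta)$ cannot be achieved from the two-sided Gaussian bounds alone, and your dichotomy does not repair it. On the good set $F_K$, the best that \eqref{uppbo} and \eqref{lowbo} yield, even in the limit $z\to 0$, is $G(M,r;z,\zeta)\geq\tfrac{1}{2\Lambda^{2}}\,e^{-(b_0-a_0)K+o(1)}\,G(M,r;0,\zeta)$: the multiplicative gap $2\Lambda^{2}e^{(b_0-a_0)d^{2}(0,\xi)/|\tau|}$ between the upper and lower bound does not shrink as $z$ approaches $0$, so no choice of $p_0$ (which only controls the size of the cylinder $C(M,\l^p r)$) can push the ratio up to $\tfrac12$ in the general axiomatic setting, where $\Lambda$ may be large and $b_0>a_0$. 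Moreover, the bad-set contribution is not an $O(e^{-a_0K})$ fraction of $v(0)$: to compare $\int_{F\smallsetminus F_K}G(M,r;0,\zeta)\,d\nu$ with $v(0)$ you must bound $G(M,r;0,\cdot)$ from below on all of $F$, where $d^{2}(0,\xi)/|\tau|$ can be as large as $\log M/\l$; this costs a factor $M^{-b_0/\l}$, so the contribution of $F\smallsetminus F_K$ is only $O\bigl(e^{-a_0K}M^{b_0/\l}\bigr)v(0)$, forcing $K\gtrsim\log M$, after which the comparison constant on $F_K$ degenerates like a negative power of $M$. The net outcome of your scheme is at best $v(z)\geq c\,M^{-\beta}v(0)$ for some structural $\beta>0$, strictly weaker than the statement and insufficient for the iteration in Lemma \ref{CL2} and the exponent bookkeeping in Theorem \ref{onedotwo}.

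The missing idea is to exploit the $\H$-parabolicity of $z\mapsto G(M,r;z,\zeta)$ rather than only the kernel bounds. The paper fixes $\zeta\in F$, normalizes $u(z)=G(M,r;z,\zeta)/G(M,r;0,\zeta)$, notes that $u$ is $\H$-parabolic in $C(M,\l^{3}r)$ (because $\tau\leq-\l^{2}r$) with $u(0)=1$, bounds $\sup u\leq\tilde c(\l)\,M^{b_0/\l}$ on an intermediate cylinder by exactly the Gaussian-quotient and doubling computation you propose, and then invokes the interior H\"older estimate for $\H$-parabolic functions (\cite[Theorem 7.2]{LU}) to obtain $|u(z)-u(0)|\leq c_1M^{b_0/\l}(\log M)^{\alpha/2}\l^{(p-3)\alpha/2}$ on $C(M,\l^{p}r)$; choosing $p\geq p_0\log M$ makes this oscillation at most $\tfrac12$, hence $u\geq\tfrac12$ there, and integrating in $\zeta$ against the equilibrium measure (your first step, which is correct and coincides with the paper's last step) concludes. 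It is this oscillation estimate that converts the lossy two-sided bound into a ratio genuinely close to $1$ near $z=0$; without it the constant $\tfrac12$ is out of reach.
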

\begin{proof} Let us denote $\Ch((x,t),r)=B(x,r)\times]t-r^2,t+r^2[$. Let $\mu$ be a $G(M,r;\cdot,\cdot)$-equilibrium measure of $F$ corresponding to $V$
\begin{equation}\label{vi}
v(z)=\int_F{G(M,r;z,\zeta)\,d\mu(\zeta)},\qquad z\in C(M,r).
\end{equation}
Let $\zeta\in F$ be arbitrarily fixed, and define
$$u(z)=\frac{G(M,r;z,\zeta)}{G(M,r;0,\zeta)},\qquad z\in C(M,\l^3 r).$$
We want to show the existence of a natural number $p\geq 3$, depending on $|\H|$ and $\log{(M)}$ as desired, such that
\begin{equation}\label{claimu}
u(z)\geq\frac{1}{2}\qquad \forall z\in C(M,\l^p r).
\end{equation}
By keeping in mind \eqref{vi}, this will prove the lemma. \\
The function $u$ is $\H$-parabolic in $C(M,\l^3 r)$. Moreover, if $\l^p\lgM<\frac{\delta^2}{2}\l^3=\frac{1}{8}\l^3$ holds true, we have the inclusions
$$C(M,\l^p r)\subseteq\Ch\left(0,\delta\sqrt{\frac{1}{2}\l^3r}\right)\subseteq\Ch\left(0,\delta\sqrt{\l^3r}\right)\subseteq C(M,\l^3r).$$
Thus, from the H\"older continuity of the $\H$-parabolic functions (see \cite[Theorem 7.2]{LU}, with the choice $\gamma=\frac{1}{\sqrt{2}}$) we have
\begin{equation}\label{hcont}
\left|u(z)-u(0)\right|\leq c\sup_{\Ch\left(0,\delta\sqrt{\l^3r}\right)}{\left|u\right|}\left(\frac{\hat{d}(z,0)}{\delta\sqrt{\l^3r}}\right)^\alpha\qquad\forall z\in C(M,\l^pr),
\end{equation}
where the positive constants $c$ and $\alpha$ ($\alpha<1$) depend just on $|\H|$. In order to estimate the supremum of $u$ we use the Gaussian bounds for $G$. Let $z=(x,t)\in \Ch\left(0,\delta\sqrt{\l^3r}\right)$ and denote $\zeta=(\xi,\tau)$. Then, by using \eqref{uppbo} and \eqref{lowbo}, we obtain
\begin{eqnarray}\label{bingu}
u(z)&\leq&2\Lambda^2\frac{\G_{a_0}(z,\zeta)}{\G_{b_0}(0,\zeta)}\leq2\Lambda^2\frac{\left|B(0,\sqrt{-\tau})\right|}{\left|B(x,\sqrt{t-\tau})\right|}\exp{\left(b_0\frac{d^2(0,\xi)}{-\tau}\right)}\nonumber\\
&\leq& 2\Lambda^2 M^{\frac{b_0}{\l}}\frac{\left|B(0,\sqrt{-\tau})\right|}{\left|B(x,\sqrt{t-\tau})\right|}.
\end{eqnarray}
On the other hand, we have
$$B\left(0,\sqrt{-\tau}\right)\subseteq B\left(x,d(x,0)+\sqrt{-\tau}\right)= B\left(x,\sqrt{t-\tau}\,\frac{d(x,0)+\sqrt{-\tau}}{\sqrt{t-\tau}}\right)\qquad\mbox{and}$$
$$\frac{d(x,0)+\sqrt{-\tau}}{\sqrt{t-\tau}}\leq\frac{\delta\sqrt{\l^3r}+\sqrt{\l r}}{\sqrt{(\l^2-\delta^2\l^3)r}}=\frac{\l+2}{\sqrt{\l}\sqrt{4-\l}}=c(\l).$$
Then, by using the doubling property in \eqref{bingu}, we obtain
$$\sup_{\Ch\left(0,\delta\sqrt{\l^3r}\right)}{u}\leq \tilde{c}(\l)M^{\frac{b_0}{\l}}$$
which implies by \eqref{hcont} that
$$\left|u(z)-u(0)\right|\leq c_1 M^{\frac{b_0}{\l}}\left(\frac{\l^pr\lgM}{\l^3r}\right)^\frac{\alpha}{2}\qquad\forall z\in C(M,\l^pr).$$
As a consequence, for every $z\in C(M,\l^pr)$, we get
$$u(z)= u(0)+(u(z)-u(0))\geq 1 - c_1 M^{\frac{b_0}{\l}}(\lgM)^\frac{\alpha}{2}\l^{(p-3)\frac{\alpha}{2}}\geq\frac{1}{2}:$$
the last inequality holds true if $p\geq p_0\lgM$ with a suitable choice of $p_0=p_0(\l)$ (independent of $M$). We want to remark that, in order to get \eqref{hcont}, we also assumed $\l^{p-3}\lgM<\frac{1}{8}$ which is satisfied with such a choice of $p$ (it would be satisfied even with a weaker $p\geq \tilde{p}_0\log{\lgM}$).
\end{proof}

\begin{lemma}\label{CL2}
Suppose we are given a sequence $\{r_k\}_{k\in\N}$ of positive real numbers such that $1\geq\l^p r_k\geq r_{k+1}$ for any $k\geq 1$, with $p$ the natural number of Lemma \ref{CL1}. Let $\{F_k\}$ be a sequence of compact sets such that
$$F_k\subset C(M,\l r_k)\cap\{(\xi,\tau)\in\R^{N+1}\,:\,\tau\leq -\l^2r_k\}\qquad\forall k\in\N.$$
Let us denote by $v_k$ a $G(M,r_k;\cdot,\cdot)$-equilibrium potential of $F_k$. For any $q\in\N$, let $V=V_q$ be the balayage potential of
$$F=\bigcup_{k=1}^q F_k.$$
Then, for every $k\in\N, k\leq q$, we have
$$1-V(z)\leq\exp{\left(-\frac{1}{2}\sum_{j=1}^kv_j(0)\right)}\qquad\forall z\in C(M,r_{k+1}).$$
\end{lemma}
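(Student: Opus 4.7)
The plan is to argue by induction on $k\in\{1,\ldots,q\}$, turning the pointwise Harnack-type bound $v_k(\cdot)\ge\tfrac12 v_k(0)$ supplied by Lemma \ref{CL1} into a multiplicative gain for $1-V$. For notational convenience, put $c_k=\exp\bigl(-\tfrac12\sum_{j=1}^k v_j(0)\bigr)$ and let $V^{(k)}$ denote the $\H$-balayage potential of $F_1\cup\cdots\cup F_k$; by monotonicity of balayage one has $V\ge V^{(k)}$, so it suffices to show $1-V^{(k)}(z)\le c_k$ for $z\in C(M,r_{k+1})$.

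In the base case $k=1$, both $V^{(1)}=V_{F_1}$ and $v_1$ are $\H$-parabolic in $C(M,r_1)\setminus F_1$; moreover $V_{F_1}\ge 0=\lim v_1$ on $\de_p C(M,r_1)$ (by Proposition \ref{propeqpot}), and $V_{F_1}\ge 1\ge v_1$ q.e. on $F_1$. The minimum principle from \cite[Proposition 3.10]{LU} therefore gives $V_{F_1}\ge v_1$ throughout $C(M,r_1)$. Since $C(M,r_2)\subseteq C(M,\l^p r_1)$, Lemma \ref{CL1} applies and, combined with $1-x\le e^{-x}$, yields $1-V^{(1)}(z)\le 1-\tfrac12 v_1(0)\le c_1$ for every such $z$.

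For the inductive step, assume the bound at level $k$ and consider the test function $w(z)=(1-c_k)+c_k\,v_{k+1}(z)$ on $C(M,r_{k+1})$. The idea is to prove the intermediate estimate
$$
1-V^{(k+1)}(z)\le c_k\bigl(1-v_{k+1}(z)\bigr)\qquad\forall z\in C(M,r_{k+1})
$$
by applying the minimum principle to $V^{(k+1)}-w$ in the open set $C(M,r_{k+1})\setminus(F_1\cup\cdots\cup F_{k+1})$, where both $V^{(k+1)}$ and $w$ are $\H$-parabolic. The boundary checks are: on $\de_p C(M,r_{k+1})$ one has $v_{k+1}\to 0$, so $w\to 1-c_k$, while the inductive hypothesis applied to $V^{(k)}\le V^{(k+1)}$ gives $V^{(k+1)}\ge 1-c_k$; on $F_{k+1}$ one has $v_{k+1}=1$, so $w=1\le V^{(k+1)}$; on $(F_1\cup\cdots\cup F_k)\cap C(M,r_{k+1})$ one has $V^{(k+1)}=1\ge w$. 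The proof is then completed by restricting to $z\in C(M,r_{k+2})\subseteq C(M,\l^p r_{k+1})$, applying Lemma \ref{CL1} to obtain $v_{k+1}(z)\ge\tfrac12 v_{k+1}(0)$, and invoking $1-x\le e^{-x}$ once more to produce $1-V^{(k+1)}(z)\le c_k\exp\bigl(-\tfrac12 v_{k+1}(0)\bigr)=c_{k+1}$.

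The delicate point is making the minimum-principle step rigorous at possibly $\H$-irregular boundary points of $F_1\cup\cdots\cup F_{k+1}$: there $V^{(k+1)}$ may fall short of $1$, and the requisite $\liminf$ inequality for $V^{(k+1)}-w$ has to be read off from the quasi-everywhere identity $V_F=1$ on $F$, the lower semicontinuity of balayage potentials, and the continuity of $w$ away from $F_{k+1}$, all within the potential-theoretic framework recalled in Section \ref{cap}.
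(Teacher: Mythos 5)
Your overall induction (partial-union balayage potentials $V^{(k)}$, monotonicity, Lemma \ref{CL1} applied at each level, and $1-x\le e^{-x}$) mirrors the paper's Steps I--IV and the bookkeeping is correct, but the step you yourself flag as ``delicate'' is a genuine gap, not a technicality. To invoke the minimum principle \cite[Proposition 3.10]{LU} for $V^{(k+1)}-w$ (and, in the base case, for $V_{F_1}-v_1$) on $C(M,r_{k+1})\smallsetminus F_{k+1}$ you must verify $\liminf\,(V^{(k+1)}-w)\ge 0$ at \emph{every} point of $\de F_{k+1}$ inside the cylinder. But a balayage potential equals $1$ on its defining compact set only quasi-everywhere: at an irregular point $\zeta\in\de F_{k+1}$ one has $V^{(k+1)}(\zeta)<1$, lower semicontinuity only gives $\liminf V^{(k+1)}\ge V^{(k+1)}(\zeta)$, while $\limsup w$ along the complement cannot be bounded away from $1$ in general (also $v_{k+1}=1$ on $F_{k+1}$ only q.e., so even your boundary check ``$w=1\le V^{(k+1)}$ on $F_{k+1}$'' is not available pointwise). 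The minimum principle provided in \cite{LU} carries no polar exceptional set, and no such refined boundary minimum principle is among the tools recalled in Section \ref{cap}; within this axiomatic $\beta$-harmonic-space framework your appeal to ``the quasi-everywhere identity plus lower semicontinuity'' does not close the argument.

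The paper's proof is structured precisely to avoid this issue: instead of comparing the balayage potential itself with $v_{k+1}$, it compares an \emph{arbitrary} competitor $u\in\Phi_F$ (respectively, its renormalization $w_{k+1}=\bigl(u-\alpha_k\bigr)/(1-\alpha_k)$ with $\alpha_k$ the bound obtained at the previous level) with $v_{k+1}$. Since any $u\in\Phi_F$ is $\H$-superharmonic, hence lower semicontinuous, and satisfies $u\ge 1$ \emph{everywhere} on $F$ by definition of $\Phi_F$, the boundary inequality at $\de F_{k+1}$ holds at every point with no exceptional set; the nonnegativity of the renormalized competitor in $C(M,r_{k+1})$ follows from $u\ge W_F\ge V\ge$ (previous-level bound). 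One then concludes for all $u\in\Phi_F$, passes to the infimum $W_F$, and finally to its lower-semicontinuous regularization $V$. If you rewrite your base case and inductive step with this competitor device (keeping your $c_k$-bookkeeping, which is fine and even slightly more generous than the paper's product $\prod_{j\le k}(1-\tfrac12 v_j(0))$), the proof becomes complete and coincides in substance with the paper's.
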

\begin{proof}
Let us fix $q\in\N$ and denote for brevity $C_k=C(M,r_k)$. We split the proof in several steps.\\
\stepone\label{unonostar} Let us prove that
\begin{equation}\label{unostep}
V\geq v_1\qquad\mbox{in }C_1.
\end{equation}
Let $u\in\Phi_F$. Since $v_1$ is $\H$-parabolic in $\Omega_1=C_1\smallsetminus F_1$, the function $u-v_1$ is $\H$-superparabolic in $\Omega_1$. Moreover we have
$$\liminf_{\Omega_1\ni z\rightarrow\zeta}{u(z)-v_1(z)}\geq0\qquad\forall\zeta\in\de_p C_1\cup\de F_1,$$
since $u\geq 1$ on $F_1$ and $u\geq0$ everywhere, whereas $v_1\leq 1$ in $C_1$ and goes to $0$ on $\de_p C_1$ (see Proposition \ref{propeqpot}). Then, by the minimum principle for $\H$-superparabolic functions (see \cite[Proposition 3.10]{LU}), $u\geq v_1$ in $\Omega_1$. This inequality extends to all $C_1$ since $u\geq1\geq v_1$ on $F_1$. Considering that $u$ is an arbitrary function in $\Phi_F$, this implies $W_F\geq v_1$ in $C_1$. Hence
$$V(z)=\liminf_{\zeta\rightarrow z}W_F(\zeta)\geq\liminf_{\zeta\rightarrow z}v_1(\zeta)\geq v_1(z)\qquad \forall z\in C_1$$
and \eqref{unostep} is proved.\\
\steponep\label{unostar} Inequality \eqref{unostep} and Lemma \ref{CL1} imply
$$V(z)\geq\frac{1}{2}v_1(0)\qquad\forall z\in C_2$$
since $C(M,\l^p r_1)\supseteq C_2$.\\
\steptwo\label{duenostar} Let us now prove that
\begin{equation}\label{duestep}
\frac{V(z)-\frac{1}{2}v_1(0)}{1-\frac{1}{2}v_1(0)}\geq v_2(z)\qquad\forall z\in C_2.
\end{equation}
Let $u\in\Phi_F$. The function
$$w_2-v_2=\frac{u-\frac{1}{2}v_1(0)}{1-\frac{1}{2}v_1(0)}-v_2$$
is $\H$-superparabolic in $\Omega_2=C_2\smallsetminus F_2$. Moreover $w_2\geq 1$ on $F_2$, $v_2\leq 1$ in $C_2$, $v_2$ and goes to $0$ on $\de_p C_2$. By \hyperref[unostar]{$Step\,\, I^*$} we have also $w_2\geq 0$ in $C_2$. All these facts imply that $\liminf_{\Omega_2\ni z\rightarrow\zeta}{w_2(z)-v_2(z)}\geq0$ for all $\zeta\in\de_p C_2\cup\de F_2$. Thus, by just proceeding as in \hyperref[unonostar]{$Step\,\, I$}, we obtain \eqref{duestep}.\\
\steptwop\label{duestar} Inequality \eqref{duestep} and Lemma \ref{CL1} imply
$$\frac{V(z)-\frac{1}{2}v_1(0)}{1-\frac{1}{2}v_1(0)}\geq\frac{1}{2}v_2(0)\qquad\forall z\in C_3.$$
This inequality can be written as follows
\begin{equation}\label{dueprod}
V(z)\geq 1-\prod_{i=1}^2{\left(1-\frac{1}{2}v_i(0)\right)}\qquad\forall z\in C_3.
\end{equation}
\stepthree By using \eqref{dueprod} and arguing as in \hyperref[duenostar]{$Step\,\, II$}, we can prove that
$$\frac{V(z)-\left(1-\prod_{i=1}^2{\left(1-\frac{1}{2}v_i(0)\right)}\right)}{\prod_{i=1}^2{\left(1-\frac{1}{2}v_i(0)\right)}}\geq v_3(z)\qquad\forall z\in C_3.$$
This inequality and Lemma \ref{CL1} give
$$\frac{V(z)-\left(1-\prod_{i=1}^2{\left(1-\frac{1}{2}v_i(0)\right)}\right)}{\prod_{i=1}^2{\left(1-\frac{1}{2}v_i(0)\right)}}\geq \frac{1}{2}v_3(0)\qquad\forall z\in C_4$$
which can be written as follows
$$V(z)\geq 1-\prod_{i=1}^3{\left(1-\frac{1}{2}v_i(0)\right)}\qquad\forall z\in C_4.$$
\stepfour By iterating the previous procedure, for every $k\in\N$ with $k\leq q$ we get
$$V(z)\geq 1-\prod_{i=1}^k{\left(1-\frac{1}{2}v_i(0)\right)}\qquad\forall z\in C_{k+1}.$$
Therefore, for every $z\in C_{k+1}$,
$$1-V(z)\leq \exp{\left(\sum_{i=1}^k\log{\left(1-\frac{1}{2}v_i(0)\right)}\right)}\leq\exp{\left(-\frac{1}{2}\sum_{i=1}^kv_i(0)\right)}$$
by the elementary inequality $\log{(1-t)}\leq-t$ for $t<1$. The proof is thus complete.
\end{proof}

\section{Proof of the main results}\label{ZSchD}

Let $\O$ be a fixed bounded open set, with $\overline{\O}\subset S$, and let $z_0=(x_0,t_0)\in\de\O$. For
$\l \in ]0,1[$  and for any $h,k\in\N$, we define the compact sets
\begin{eqnarray}\label{omhk}
\Dhk&=&\left\{\zeta=(\xi,\tau)\in S\smallsetminus\O\,:\,\l^{k+1}\leq t_0-\tau\leq\l^k,\vphantom{\left(\frac{1}{\l}\right)^h}\right.\\
&\,&\quad\left.\exp{\left(\frac{d^2(x_0,\xi)}{t_0-\tau}\right)}\leq\left(\frac{1}{\l}\right)^h,\,\,\hat{d}(z_0,\zeta)\leq\sqrt{\l}\right\}. \nonumber
\end{eqnarray}

Moreover, for every $a,b>0$ and $s\in\R$, let us put
\begin{equation}\label{zabs}
z_a^b(\lambda; s)=\sum_{\N\ni k\leq s}\sum_{h=1}^{+\infty}\frac{\C_a\left(\Dhk\right)}{\left|B\left(x_0,\sqrt{\l^k}\right)\right|}\l^{bh},
\end{equation}
where we agree to let $z_a^b=0$ whenever the first summation is meaningless, i.e. for $s<1$. Finally, for every $z\in S$, let us define
$$Z_a^b(\lambda; z_0,z)=z_a^b\left(\l; \frac{\log{{\hat{d}}^2(z_0,z)}}{\log{\l}}\right).$$
The main aim of this section is to prove Theorem \ref{onedotwo}. Before starting the proof, some remarks are in order.
\begin{remark}\label{omegad} For every $\lambda \in]0,1[$, for every fixed $k \in \mathbb N$ and for every $a,b >0$, we have
$$ \sum_{h=1}^{\infty} \lambda^{b h} \C_a (\Omega_k^h(z_0,\lambda))\leq \sum_{h=1}^{\infty} \lambda^{b h} \C_a (\Dhk) \leq \frac{1}{1 - \lambda^b} \sum_{h=1}^{\infty} \lambda^{b h} \C_a (\Omega_k^h(z_0,\lambda)).$$
\end{remark}

\begin{remark}\label{zetamu} For every $\lambda, \mu \in ]0, 1[$ and for every $0<a<b$, there exists a positive constant $C$, only depending on $\lambda, \mu , a,b, Q $, such that
\begin{eqnarray}\label{zetalambdamu}
z_a^b (\lambda;s ) \leq  C\, \left( z_a^b (\mu;\sigma s ) + 1 \right) \quad  \mbox{for every } s,
\end{eqnarray}
where $\sigma = \frac{\log \lambda}{\log \mu}$.
\end{remark}
\noindent We postpone the proof of these remarks to Subsection \ref{app}. For our purposes it is now crucial to stress that from \eqref{zetalambdamu} it follows
\begin{eqnarray}\label{zetalambdamugrande}
Z_a^b(\lambda; z_0,z) \leq C \left( Z_a^b(\mu; z_0,z)  + 1\right) \quad \mbox{for every } z \in S.
\end{eqnarray}

\begin{proof}[Proof of Theorem \ref{onedotwo}] In the notations we  fixed above, we want to prove that
\begin{equation}\label{wmainz}
\W_{\rho}(z)\leq C\exp{\left(-\frac{1}{C}Z_a^b(\lambda; z_0,z)\right)}\qquad\mbox{for every }z\in S,
\end{equation}
for a suitable structural constant $C$. By Remark \ref{omegad}, it is equivalent to \eqref{wrhos}. Moreover, due to Remark \ref{zetamu}, it is not restrictive to assume $0 < \lambda\leq\lambda_0$, with $\lambda_0$ fixed in Lemma \ref{bzeroproprio}.

\noindent Let us fix $z_0=0$. For any $h,k\in\N$, we put
$$\Fhk=\left\{\zeta=(\xi,\tau)\in S\smallsetminus\O\,:\,\l^{k+1}\leq-\tau\leq\l^k,\,\,\,\exp{\left(\frac{d^2(0,\xi)}{-\tau}\right)}\leq\left(\frac{1}{\l}\right)^h\right\},$$
$G_k^h(\cdot,\cdot)$ the Green function of $C\left(\left(\frac{1}{\l}\right)^h,\l^{k-1}\right)$, and $v_k^h$ a $G_k^h$-potential of $\Fhk$. We remark that the compact set $\Fhk$ is compactly contained in $C\left(\left(\frac{1}{\l}\right)^h,\l^{k-1}\right)$. Moreover, for $l\in\N$, we put
$$\O_l=\left\{\zeta=(\xi,\tau)\in S\smallsetminus\O\,:\,\tau\leq0,\,\,\,\hat{d}(0,\zeta)\leq\l^\frac{l}{2}\right\}.$$
If $k\geq k(l,h)=l+\frac{\log{(1+h^2\log^2{\frac{1}{\l}})}}{2\log{\frac{1}{\l}}}$, then $\Fhk\subseteq\O_l$. We also note that, for any $k$, $\Fhk$ is actually contained in
$$C\left(\left(\frac{1}{\l}\right)^h,\l\cdot\l^{k-1}\right)\cap\left\{(\xi,\tau)\in\R^{N+1}\,:\,\tau\leq -\l^2\cdot\l^{k-1}\right\}.$$
Let $p\in\N$ be the one coming from Lemma \ref{CL1}, i.e. the smaller integer greater than $p_0h\log{\frac{1}{\l}}$. For every fixed $q\in\N$ there exists $j\in\{0,\ldots,p-1\}$ such that
$$\frac{1}{p}\sum_{k\geq k(l,h)}^{pq}v_k^h(0)\leq\sum_{\substack{i=0\\pi+j\geq k(l,h)}}^qv_{pi+j}^h(0).$$
Then, if $V_{\O_l}$ and $V$ denote respectively the balayage potentials of $\O_l$ and of $$\bigcup_{\substack{i=0\\pi+j\geq k(l,h)}}^q F_{pi+j}^h,$$
by the monotonicity of the balayage potential (see \cite[Proposition 4.2]{LU}) and by Lemma \ref{CL2} (with $r_i=\l^{pi+j-1}$) we have
\begin{equation}\label{duepunti}
1-V_{\O_l}(z)\leq 1- V(z)\leq \exp{\left(-\frac{1}{2}\sum_{\substack{i=0\\pi+j\geq k(l,h)}}^qv_{pi+j}^h(0)\right)}\leq \exp{\left(-\frac{1}{2p}\sum_{k\geq k(l,h)}^{pq}v_k^h(0)\right)}
\end{equation}
for every $z\in C\left(\left(\frac{1}{\l}\right)^h,\l^{p(q+1)+j-1}\right)\supset \B\left(0,\sqrt{\l^{p(q+2)}}\right)$.\\
Now, if $\nu_k^h$ denotes a $G_k^h$-equilibrium measure of $\Fhk$, we have for every $b>b_0$
\begin{eqnarray}\label{trepunti}
v_k^h(0)&=&\int_{\Fhk}{G_k^h(0,\zeta)}\,d\nu_k^h(\zeta)\geq \frac{1}{2\Lambda}\int_{\Fhk}{\G_{b_0}(0,\zeta)}\,d\nu_k^h(\zeta) \nonumber\\
&\geq& \frac{1}{2\Lambda}\frac{\l^{b_0h}}{\left|B(0,\l^{\frac{k}{2}})\right|}\nu_k^h\left(\Fhk\right)\geq c\frac{\C_a\left(\Fhk\right)}{\left|B(0,\l^{\frac{k}{2}})\right|}\l^{bh}\l^{-(b-b_0)h}.
\end{eqnarray}
by Lemma \ref{bzeroproprio} and the fact that $\C_{G_k^h}\geq \C_\Gamma \geq c \C_a$. Let us put $\alpha_k^h=\frac{\C_a\left(\Fhk\right)}{\left|B(0,\l^{\frac{k}{2}})\right|}\l^{bh}$. By using the estimate in \eqref{cipiuno} and the doubling property, we have
\begin{equation}\label{qpunti}
\alpha_k^h\leq C\frac{\left|B\left(0,\sqrt{\sqrt{2}\log{(\frac{1}{\l})}h\l^k}\right)\right|}{\left|B\left(0,\l^{\frac{k}{2}}\right)\right|}\l^{bh}
\leq C h^{\frac{Q}{2}}\l^{bh}.
\end{equation}
Inserting \eqref{trepunti} in \eqref{duepunti} and keeping in mind that $p\leq (p_0+1)h\log{\frac{1}{\l}}$, we get
$$-h\l^{(b-b_0)h}\log{\left(1-V_{\O_l}(z)\right)}\geq c\sum_{k\geq k(l,h)}^{pq}\alpha_k^h\qquad\mbox{if }\hat{d}(0,z)\leq\sqrt{\l^{p(q+2)}}.$$
On the other hand, by using \eqref{qpunti} we have
$$\sum_{k=1}^{k(l,h)}\alpha_k^h\leq C h^{\frac{Q}{2}}\l^{bh}k(l,h)=k^*(l,h).$$
Thus
\begin{equation}\label{cpunti}
-h\l^{(b-b_0)h}\log{\left(1-V_{\O_l}(z)\right)}\geq c\sum_{k=1}^{pq}\alpha_k^h-ck^*(l,h)\qquad\mbox{if }\hat{d}(0,z)\leq\sqrt{\l^{p(q+2)}}.
\end{equation}
Suppose $z\in S$ be such that $\hat{d}^2(0,z)\leq\l^{p(q+2)}$ and let $q$ be the minimum (if it exists) natural number satisfying this inequality. Then, letting $$\qz=\frac{\log{\left(\frac{1}{\d^2(0,z)}\right)}}{\log\frac{1}{\l}},$$ we have $p(q+2)\leq\qz\leq p(q+3)$ so that $pq\geq\qz-3p\geq\qz-\tilde{c}h$. Using this bound in \eqref{cpunti} we get
$$-h\l^{(b-b_0)h}\log{\left(1-V_{\O_l}(z)\right)}\geq c\sum_{k=1}^{\qz-\tilde{c}h}\alpha_k^h-ck^*(l,h)$$
for every $h\in\N$ and for every $z\in S$ such that $\qz\geq p(q+2)$ for at least one $q\in\N$, in particular for every $z\in S$ such that $\qz\geq3h (p_0+1)\log{\frac{1}{\l}}$. On the other hand
$$\sum_{\qz-\tilde{c}h\leq k\leq \qz}\alpha_k^h\leq C\tilde{c} h^{\frac{Q}{2}+1}\l^{bh},$$
hence, by letting $k^{**}(l,h)=k^*(l,h)+C\tilde{c} h^{\frac{Q}{2}+1}\l^{bh}$, we have
\begin{equation}\label{spunti}
-h\l^{(b-b_0)h}\log{\left(1-V_{\O_l}(z)\right)}\geq c\sum_{k=1}^{\qz}\alpha_k^h-ck^{**}(l,h)
\end{equation}
for every $z\in S$ and $h\in\N$ satisfying $3h (p_0+1)\log{\frac{1}{\l}}\leq\qz$. Let us now suppose $\qz>3(p_0+1)\log{\frac{1}{\l}}$. Then, inequality \eqref{spunti} holds true for any $h\in\N$ such that $h\leq h(z)=\frac{\qz}{3(p_0+1)\log{\frac{1}{\l}}}$. Thus, summing up in \eqref{spunti} with respect to $h$, we get
$$-\left(\sum_{h\leq h(z)}h\l^{(b-b_0)h}\right)\log{\left(1-V_{\O_l}(z)\right)}\geq c\sum_{h\leq h(z)}\sum_{k=1}^{\qz}\alpha_k^h-c\sum_{h\leq h(z)}k^{**}(l,h).$$
Therefore, since $\sum_{h=1}^{+\infty}h\l^{(b-b_0)h}\leq C_0<\infty$, $\sum_{h=1}^{+\infty}k^{**}(l,h)\leq C_0 l$, and
$$\sum_{h\geq h(z)}\sum_{k=1}^{\qz}\alpha_k^h\leq C\qz\sum_{h\geq h(z)}h^{\frac{Q}{2}}\l^{bh}\leq 3C(p_0+1)\log{\frac{1}{\l}}\sum_{h\geq h(z)}h^{\frac{Q}{2}+1}\l^{bh}\leq C_0$$
with $C_0$ independent of $z$ and $l$, we get
\begin{equation}\label{unti}
-\log{\left(1-V_{\O_l}(z)\right)}\geq\frac{c}{C_0}\sum_{h=1}^{+\infty}\sum_{k\leq\qz}\alpha_k^h-2c\cdot l
\end{equation}
for every $z\in S$ such that $\qz>3(p_0+1)\log{\frac{1}{\l}}$. On the other hand, if $\qz\leq3(p_0+1)\log{\frac{1}{\l}}$, we have
$$\sum_{h=1}^{+\infty}\sum_{k\leq\qz}\alpha_k^h\leq \sum_{h=1}^{+\infty}\sum_{k\leq3(p_0+1)\log{\frac{1}{\l}}}\alpha_k^h<\infty.$$
Thus, we can adjust the structural constants in \eqref{unti} in order that the relation \eqref{unti} holds true for every $z\in S$. Then, for some structural constant $C$, we finally have
$$1-V_{\O_l}(z)\leq\exp{(C\cdot l)}\exp{\left(-\frac{1}{C}\sum_{h=1}^{+\infty}\sum_{k\leq\qz}\frac{\C_a\left(\Fhk\right)}{\left|B(0,\l^{\frac{k}{2}})\right|}\l^{bh}\right)}\leq\exp{(C\cdot l)}\exp{\left(-\frac{1}{C}Z_a^b(z_0,z)\right)}$$
for every $z\in S$, and for every $l\in\N$. Thus, if we choose $\rho\in]0,1[$ such that $\rho<e^{-C}$, we have
$$\W(z)=\sum_{l=1}^{+\infty}\rho^l\left(1-V_{\O_l}(z)\right)\leq \exp{\left(-\frac{1}{C}Z_a^b(z_0,z)\right)}\sum_{l=1}^{+\infty}\left(\rho e^C\right)^l\leq C\exp{\left(-\frac{1}{C}Z_a^b(z_0,z)\right)}$$
for every $z\in S$ and the theorem is proved.
\end{proof}

As we remarked in the Introduction, the last theorem gives an estimate of the \emph{modulus of continuity} of the PWBB-solution to the Dirichlet problem only depending on the boundary datum $\phi$, on $\O$, and on the structural constants in $|\H|$. We are now going to see that it gives straightforwardly a Wiener-type $\H$-regularity test (Theorem \ref{mmmain}, part $(i)$). Furthermore, we have to prove the necessary counterpart for the $\H$-regularity (part $(ii)$).

\begin{proof}[Proof of Theorem \ref{mmmain}]
For the part $(i)$ we have just to observe that the hypothesis and \eqref{wrhos} imply that $\W(z)\rightarrow 0$ as $z\rightarrow z_0$. The $\H$-regularity of $z_0$ follows then by \eqref{charw}.\\
Let us turn to the proof of part $(ii)$. It follows from a result in \cite[Proposition 4.12]{LU}. As a matter of fact, we have
$$\left(\bigcup_{h,k\in\N}\Ohk\right)\cup\left(\left(\overline{B\left(x_0,\sqrt{\l}\right)}\times\{t=t_0\}\right)\smallsetminus\O\right)=\left(\overline{\B\left(z_0,\sqrt{\l}\right)}\cap\{t\leq t_0\}\right)\smallsetminus\O,$$
and $z_0\notin \Ohk$ for any $h,k\in\N$. Then, if $z_0$ is $\H$-regular,
\begin{equation}\label{seisom}
\sum_{h,k=1}^{+\infty}V_{h,k}(z_0)=+\infty
\end{equation}
where $V_{h,k}$ denotes the balayage potential of $\Ohk$. Let now $\mu_{h,k}$ be the balayage equilibrium measure of $\Ohk$. Then, by the very definition of $\Ohk$ in \eqref{omhktilde}, the doubling property, and by \cite[Proposition 8.3]{LU}, we get
\begin{eqnarray*}
V_{h,k}(z_0)&=&\int_{\Ohk}\hspace{-0.2cm}\Gamma(z_0,\zeta)\,d\mu_{h,k}(\zeta)\leq \Lambda\int_{\Ohk}\hspace{-0.2cm}\G_{a_0}(z_0,\zeta)\,d\mu_{h,k}(\zeta)\\
&\leq& \Lambda\frac{\l^{a_0(h-1)}}{\left|B\left(x_0,\sqrt{\l^{k+1}}\right)\right|}\mu_{h,k}\left(\Ohk\right)\leq C\frac{\l^{a_0h}}{\left|B\left(x_0,\l^{\frac{k}{2}}\right)\right|}\C_{b_0}\left(\Ohk\right),
\end{eqnarray*}
where in the last inequality we have exploited Corollary \ref{aHb}. The proof is then complete by inserting this relation in \eqref{seisom} and keeping also in mind that $\C_{b_0}\leq\C_b$ for any $b\geq b_0$.
\end{proof}

\subsection{Appendix to Section \ref{ZSchD}}\label{app}

Let us complete here the proofs of Remark \ref{omegad} and Remark \ref{zetamu}.

\begin{proof}[Proof of Remark \ref{omegad}] For every $h\in {\mathbb N}$,
$$
D_k^h(z_0,\lambda) = \cup_{j=1}^h\,\Omega_k^j(z_0,\lambda).
$$
Then, since ${\mathcal C}_a$ is subadditive,
\begin{eqnarray*}
\sum_{h=1}^{\infty} \lambda^{bh} \, {\mathcal C}_a (D_k^h(z_0,\lambda)) &\leq& \sum_{h=1}^{\infty} \lambda^{bh} \, \sum_{j=1}^h  \,{\mathcal C}_a (\Omega_k^j(z_0,\lambda))\\
= \sum_{j=1}^{\infty} \lambda^{bj}{\mathcal C}_a (\Omega_k^j(z_0,\lambda))\, \sum_{h=j}^{\infty} \lambda^{b(h-j)}&=& \frac{1}{1 - \lambda^b}\, \sum_{j=1}^{\infty} \lambda^{bj}{\mathcal C}_a (\Omega_k^j(z_0,\lambda)).
\end{eqnarray*}
The other inequality follows just by $\Ohk\subseteq\Dhk$ and the monotonicity of $\C_a$.
\end{proof}

\begin{proof}[Proof of Remark \ref{zetamu}] For every $k \in \mathbb N$ let us define
$$\sigma(k)=[\sigma k] = \mbox{integer part of } \sigma k.$$
Then, since $\l = \mu^{\sigma}$,
$$\lambda^k = \mu^{\sigma k} \leq \mu^{\sigma(k)}$$
and, letting $q = [\sigma] + 1$,
$$\lambda^{k+1} = \mu^{\sigma k + \sigma} \geq \mu^{\sigma(k) + 1 + \sigma} \geq \mu^{\sigma(k) + 1 + q}.$$
Summing up
$$\mu^{\sigma(k) + q + 1} \leq \lambda^{k+1} \leq \lambda^k \leq\mu^{\sigma(k)}.$$
Analogously
$$\left(\frac{1}{\mu}\right)^{\sigma(h)-q} \leq \left(\frac{1}{\lambda}\right)^{h-1} \leq \left(\frac{1}{\lambda}\right)^{h} \leq\left(\frac{1}{\mu}\right)^{\sigma(h) +1}$$
for every $h \in \mathbb N$. As a consequence, letting
$${\mathcal A}_{(k,h)}=\{ (i,j) \in \mathbb N \times \mathbb N : \sigma(k)\leq i \leq \sigma(k) + q, \,\, \sigma(h)-q+1  \leq j \leq \sigma(h) + 1\},$$
we have
$$\Omega_k^h(z_0, \lambda) \, \subset \, \bigcup_{(i,j) \in {\mathcal A}_{(k,h)}} \Omega_i^j(z_0, \mu).$$
Moreover, for every $(i,j) \in {\mathcal A}_{(k,h)}$,
$$\frac{\l^{bh}}{|B(x_0, \lambda^{\frac{k}{2}})|}\,\, \frac{|B(x_0, \mu^{\frac{i}{2}})|}{\mu^{bj}} \,\leq \, \left( \frac{\lambda^h}{\mu^{\sigma(h) + 1}} \right)^b \, \frac{|B(x_0, \mu^{\frac{\sigma(k)}{2}})|}{|B(x_0, \lambda^{\frac{k}{2}})|}
 \leq c_d\frac{1}{\mu^{b+\frac{Q}{2}}} = C_1$$
by the doubling condition (\hyperref[didue]{D2}). Therefore
$$\frac{\lambda^{bh}\, {\mathcal C}_a (\Omega_k^h(z_0, \lambda))}{|B(x_0, \lambda^{\frac{k}{2}})|} \leq  C_1 \sum_{(i,j) \in {\mathcal A}_{(k,h)}} \frac{\mu^{bj}\, {\mathcal C}_a (\Omega_i^j(z_0, \mu))}{|B(x_0, \mu^{\frac{i}{2}})|}.$$
To simplify the notation we denote by $c_{k,h}(\lambda)$ the term at the left hand side of this last inequality. Then
$$z_a^b(\lambda; s) = \sum_{k \leq s}\, \sum_{h \geq 1} c_{k,h}(\lambda)\, \leq \, C_1 \,  \sum_{k \leq s}\, \sum_{h \geq 1}\, \sum_{(i,j) \in {\mathcal A}_{(k,h)}} c_{i,j} (\mu).$$
On the other hand for a fixed $(i,j)$ we have
$$\sharp \left\{ (k,h) : (i,j) \in {\mathcal A}_{(k,h)}\vphantom{\sum}\right\}\, \leq \, \left( 1 + \frac{q+1}{\sigma}\right)^2,$$
whereas  $i \leq \sigma s+q+1$ if $(i,j)\in {\mathcal A}_{(k,h)}$ and $k\leq s$. Therefore
$$z_a^b(\lambda; s) \leq C_2 \sum_{i \leq \sigma s+q+1}\,\, \sum_{j \geq 1} c_{i,j}(\mu), \quad \mbox{where } C_2 = C_1 \left( 1 + \frac{q+1}{\sigma}\right)^2.$$
Hence
\begin{eqnarray}\label{zetapiccola}
z_a^b(\lambda; s)\, \leq \, C_2\, \left( z_a^b(\mu; \sigma s)\, +  \sum_ {\sigma s \leq i \leq \sigma s+q+1}\,\, \sum_{j \geq 1} c_{i,j}(\mu)  \right).
\end{eqnarray}
We now claim that
\begin{eqnarray}\label{stimacapacitaria}
c_{i,j}(\mu) \leq  \,\,\mu^{(b-a)j}.
\end{eqnarray}
Let us take for a moment this claim for granted. As a consequence, keeping in mind that $ b > a$,
$$\sum_ {\sigma s \leq i \leq \sigma s + q+1}\, \sum_{j \geq 1} c_{i,j}(\mu) \,\, \leq \,\, (q+2) \sum_{j \geq 1} \mu^{(b-a)j} = C_3.$$
Using this estimate in (\ref{zetapiccola}) we immediately obtain the inequality stated in Remark \ref{zetamu}. We are thus left with the proof of \eqref{stimacapacitaria}, i.e. with the proof of the following inequality
\begin{eqnarray}\label{stimacij}
\frac{ {\mathcal C}_a (\Omega_i^j(z_0,\mu))}{|B(x_0, \mu^{\frac{i}{2}})|} \, \leq \, \mu^{-aj}.
\end{eqnarray}
To this aim, we first remark that, for every $z=(x,t) \in \Omega_i^j(z_0, \mu)$, one has by definition
$$t_0 - t \leq \mu^i, \quad \mbox{and} \quad \exp \left(  -a \, \frac{d^2(x_0,x)}{t_0 - t}\right) \geq \mu^{aj}.$$
Therefore
$$\G_a (z_0,z) \geq \frac{1}{|B(x_0, \mu^{\frac{i}{2}})|}\, \mu^{aj}, \quad \forall\, z \in \Omega_i^j(z_0, \mu).$$
As a consequence, if $v$ is a $\G_a$- equilibrium potential of $\Omega_i^j(z_0, \mu)$ and $\nu$ is a corresponding equilibrium measure, we have
$$ 1 \geq v(z_0) = \int_{\Omega_i^j(z_0, \mu)} \G_a(z_0,z) d\nu(z) \geq \frac{1}{|B(x_0, \mu^{\frac{i}{2}})|} \mu^{aj}\, \nu(\Omega_i^j(z_0, \mu)).$$
Hence
$${\mathcal C}_a (\Omega_i^j(z_0,\mu))\, \leq \, |B(x_0, \mu^{\frac{i}{2}})|\, \mu^{-aj},$$
which is exactly (\ref{stimacij}).
\end{proof}

\section{Integral bound and cone condition}\label{IBCCond}

Let $\O$ be a bounded open set with $\overline{\O}\subset S$, and let $z_0=(x_0,t_0)\in\de\O$. For $\l\leq\l_0$ and such that $T_1<t_0-\l$, let us define, for $\rho>1$ and $t_0-\l\leq\tau<t_0$,
$$\Orol=\left\{z=(x,t)\in S\smallsetminus\O\,:\d(z_0,z)\leq \sqrt{\l},\quad\exp{\left(\frac{d^2(x_0,x)}{t_0-t}\right)}\leq\rho\right\},$$
$$\erolt=\Orol\cap\{t=\tau\},\quad\mbox{and}\quad\mrolt=\left|\erolt\right|.$$
We would like to prove now Theorem \ref{intnotcap}, i.e. an estimate of $\W$ in terms of $m_\l(\cdot,\cdot)$ or more precisely in terms of the following function
$$M_\l(z_0,z)=\int_{\d^2(z_0,z)}^{\l}{\int_1^{+\infty}{\frac{m_\l\left(\rho,t_0-\eta\right)}{\left|B\left(x_0,\sqrt{\eta}\right)\right|}}\frac{d\rho}{\rho^{1+b}}\frac{d\eta}{\eta}}.$$

\begin{proof}[Proof of Theorem \ref{intnotcap}] In our notations, we want to prove the following
\begin{equation}\label{zibdue}
\W_{\rho}(z)\leq C\exp{\left(-\frac{1}{C}M_\l(z_0,z)\right)}\qquad\mbox{for every }z\in S,
\end{equation}
for $\rho$ small enough and a suitable $C$.\\
First of all, keeping in mind the definition of $\Dhk$, we have
$$\Dhk\supseteq E_\l\left(\left(\frac{1}{\l}\right)^h,\tau\right),\qquad\mbox{for }\l^{k+1}\leq t_0-\tau\leq\l^k.$$
Let us fix $0<a\leq a_0$ and $b>b_0$, where $a_0$ and $b_0$ are the positive constants in (\hyperref[bounds]{H}). Then, by Proposition \ref{cAtau},
$$\C_a(\Dhk)\geq\frac{1}{C}m_\l\left(\left(\frac{1}{\l}\right)^h,\tau\right)\qquad\mbox{for every }\l^{k+1}\leq t_0-\tau\leq\l^k.$$
Thus, for $s\geq 1$,
\begin{eqnarray*}
z_a^b(\l;s)&\geq&\frac{1}{C}\sum_{h=1}^{+\infty}\l^{hb}\sum_{k\leq s}\frac{1}{\left|B\left(x_0,\sqrt{\l^k}\right)\right|}\max_{\l^{k+1}\leq t_0-\tau\leq\l^k}{m_\l\left(\left(\frac{1}{\l}\right)^h,\tau\right)}\\
&\geq&\frac{\l^{\frac{Q}{2}}}{C}\sum_{h=1}^{+\infty}\l^{hb}\sum_{k\leq s}\int_k^{k+1}{\frac{m_\l\left(\left(\frac{1}{\l}\right)^h,t_0-\l^\sigma\right)}{\left|B\left(x_0,\l^{\frac{\sigma}{2}}\right)\right|}\,d\sigma}\\
&\geq&\frac{\l^{\frac{Q}{2}}}{C}\sum_{h=1}^{+\infty}\l^{hb}\int_1^{s}{\frac{m_\l\left(\left(\frac{1}{\l}\right)^h,t_0-\l^\sigma\right)}{\left|B\left(x_0,\l^{\frac{\sigma}{2}}\right)\right|}\,d\sigma}\\
&=&\frac{\l^{\frac{Q}{2}}}{C\log{\frac{1}{\l}}}\sum_{h=1}^{+\infty}\l^{hb}\int_{\l^s}^{\l}{\frac{m_\l\left(\left(\frac{1}{\l}\right)^h,t_0-\eta\right)}{\left|B\left(x_0,\sqrt{\eta}\right)\right|}\frac{d\eta}{\eta}}\\
&\geq&\frac{\l^{\frac{Q}{2}}}{C\log{\frac{1}{\l}}}\int_{\l^s}^{\l}{\frac{1}{\left|B\left(x_0,\sqrt{\eta}\right)\right|}\left(\sum_{h=2}^{+\infty}\int_{h-1}^h{\l^{bh}m_\l\left(\left(\frac{1}{\l}\right)^h,t_0-\eta\right)}\,dr\right)\frac{d\eta}{\eta}}\\
&\geq&\frac{\l^{\frac{Q}{2}+b}}{C\log{\frac{1}{\l}}}\int_{\l^s}^{\l}{\frac{1}{\left|B\left(x_0,\sqrt{\eta}\right)\right|}\left(\int_1^{+\infty}{\l^{br}m_\l\left(\left(\frac{1}{\l}\right)^r,t_0-\eta\right)}\,dr\right)\frac{d\eta}{\eta}}\\
&=&\frac{\l^{\frac{Q}{2}+b}}{C\log^2{\frac{1}{\l}}}\int_{\l^s}^{\l}{\frac{1}{\left|B\left(x_0,\sqrt{\eta}\right)\right|}\left(\int_{\frac{1}{\l}}^{+\infty}{m_\l\left(\rho,t_0-\eta\right)}\frac{d\rho}{\rho^{1+b}}\right)\frac{d\eta}{\eta}}.
\end{eqnarray*}
Then, we have proved the inequality
\begin{equation}\label{zibuno}
z_a^b(\l;s)\geq c\int_{\l^s}^{\l}{\int_{\frac{1}{\l}}^{+\infty}{\frac{m_\l\left(\rho,t_0-\eta\right)}{\left|B\left(x_0,\sqrt{\eta}\right)\right|}}\frac{d\rho}{\rho^{1+b}}\frac{d\eta}{\eta}}.
\end{equation}
On the other hand, since
$$\frac{m_\l\left(\rho,t_0-\eta\right)}{\left|B\left(x_0,\sqrt{\eta}\right)\right|}\leq\frac{\left|B\left(x_0,\sqrt{\eta\log{\rho}}\right)\right|}{\left|B\left(x_0,\sqrt{\eta}\right)\right|}\leq 1+c_d(\log{\rho})^{\frac{Q}{2}},$$
the integral at the right hand side of \eqref{zibuno} can be estimated from below with
$$\int_{\l^s}^{\l}{\int_1^{+\infty}{\frac{m_\l\left(\rho,t_0-\eta\right)}{\left|B\left(x_0,\sqrt{\eta}\right)\right|}}\frac{d\rho}{\rho^{1+b}}\frac{d\eta}{\eta}}-C.$$
As a consequence
$$z_a^b(\l;s)\geq c\left(\int_{\l^s}^{\l}{\int_1^{+\infty}{\frac{m_\l\left(\rho,t_0-\eta\right)}{\left|B\left(x_0,\sqrt{\eta}\right)\right|}}\frac{d\rho}{\rho^{1+b}}\frac{d\eta}{\eta}}-C\right)\qquad\mbox{for }s\geq 1;$$
whereas for $s<1$ such inequality is satisfied by definition. Therefore, by \eqref{wmainz} we get \eqref{zibdue}.\\
For the last statement in Theorem \ref{intnotcap} we have just to observe that the divergence of the integral implies that $\W(z)\rightarrow 0$ as $z\rightarrow z_0$. The $\H$-regularity of $z_0$ follows then by \eqref{charw}.
\end{proof}

Let us now recall explicitly the definition of $d$-cone condition.
\begin{definition}[Exterior $d$-cone condition]\label{D4.8}
We say that $\Omega$ satisfies the exterior $d$-cone condition at a point $z_0=(x_0,t_0)\in\de \Omega$ if there exist $M_0,r_0,\theta>0$ such that
\begin{equation}\label{4.4}
|\{x\in \overline{B(x_0,M_0r)}\,:\,(x,t_0-r^2)\not\in\Omega\}|\ge\theta|B(x_0,M_0r)|
\end{equation}
for every $0<r\le r_0$.
\end{definition}

We want to prove that, if $\O$ satisfies this condition at $z_0\in\de\O$, then $\W$ is H\"older continuous at $z=z_0$. This will give a quantitative version of the $\H$-regularity of $z_0$, already proved in \cite[Theorem 4.11]{LU}.

\begin{theorem}\label{cconduno}
Assume $\O$ satisfies the exterior $d$-cone condition at $z_0=(x_0,t_0)\in\de\O$. Then there exist $C$ and $\alpha$ such that
$$\W(z)\leq C\left(\d(z_0,z)\right)^\alpha.$$
The positive constants $C$ and $\alpha$ ($\alpha\leq 1$) depend only on the parameters $M_0, r_0, \theta$ in Definition \ref{D4.8}, and on $|\H|$.
\end{theorem}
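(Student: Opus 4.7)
The plan is to combine the integral estimate of Theorem \ref{intnotcap} with the geometric content of the exterior $d$-cone condition \eqref{4.4} to produce a logarithmic lower bound for the double integral $M_\l(z_0,z)$, which the exponential then turns into a power of $\d(z_0,z)$.

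First I would unpack the set $\erolt$ when $\tau=t_0-\eta$ and $r=\sqrt{\eta}$. For any $x\in\overline{B(x_0,M_0r)}$ with $(x,t_0-r^2)\notin\O$, one has
\[
\frac{d^2(x_0,x)}{t_0-(t_0-r^2)}\le M_0^2,\qquad \d^4(z_0,(x,t_0-r^2))\le(M_0^4+1)r^4.
\]
Hence if $\rho\ge e^{M_0^2}$ and $r\le r_1:=\min\{r_0,\sqrt{\l}/(M_0^4+1)^{1/4}\}$, the cone condition \eqref{4.4} gives
\[
\frac{|\erolt|}{|B(x_0,\sqrt{\eta})|}\ge \theta\,\frac{|B(x_0,M_0\sqrt{\eta})|}{|B(x_0,\sqrt{\eta})|}\ge c_0,
\]
where $c_0$ depends only on $\theta, M_0, c_d$ via the doubling property (\hyperref[didue]{D2}). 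This is the key geometric step; the main subtlety is just to fix $r_1$ small enough (in terms of $\l, M_0, r_0$) so that $\d(z_0,(x,t_0-r^2))\le\sqrt{\l}$.

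With this lower bound in hand, I would estimate the inner integral in \eqref{unodici} for $\eta\le r_1^2$:
\[
\int_1^{+\infty}\frac{|\erolt|}{|B(x_0,\sqrt{\eta})|}\,\frac{d\rho}{\rho^{1+b}}
\ge c_0\int_{e^{M_0^2}}^{+\infty}\frac{d\rho}{\rho^{1+b}}=\frac{c_0\,e^{-bM_0^2}}{b}=:c_1.
\]
Integrating in $\eta$ between $\d^2(z_0,z)$ and $\min\{\l,r_1^2\}$ (assuming $\d^2(z_0,z)$ is below this upper limit) then yields
\[
M_\l(z_0,z)\ge c_1\int_{\d^2(z_0,z)}^{\min\{\l,r_1^2\}}\frac{d\eta}{\eta}
=c_1\log\!\left(\frac{\min\{\l,r_1^2\}}{\d^2(z_0,z)}\right).
\]
Plugging this into \eqref{unodici} gives $\W_\rho(z)\le C\,\d(z_0,z)^{2c_1/C}$ for all $z$ with $\d(z_0,z)\le \sqrt{\min\{\l,r_1^2\}}$, which is the asserted Hölder estimate with $\alpha:=\min\{1,2c_1/C\}$.

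Finally I would dispose of the complementary range $\d(z_0,z)>\sqrt{\min\{\l,r_1^2\}}$ by the trivial bound $0\le\W_\rho\le\rho/(1-\rho)$, absorbing the resulting multiplicative factor into $C$. Tracking dependencies shows that $C$ and $\alpha$ depend only on $|\H|$, $M_0$, $r_0$, $\theta$ (and $\l$, which in turn is structural). The only genuinely non-routine point in the whole plan is the inclusion argument for $\erolt$: everything else is the routine manipulation of the double integral $M_\l$ and a rescaling of the cone opening $M_0$ through the doubling constant.
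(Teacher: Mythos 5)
Your proposal is correct and takes essentially the same route as the paper's own proof: the exterior cone condition gives $\left|E_\l\left(e^{M_0^2},t_0-\eta\right)\right|\ge\theta\left|B\left(x_0,M_0\sqrt{\eta}\right)\right|$ for $\eta\le\min\{r_0^2,\l/\sqrt{1+M_0^4}\}$, the doubling property turns this into a uniform lower bound on the integrand for $\rho\ge e^{M_0^2}$, the $\eta$-integration produces the logarithm which the exponential in \eqref{unodici} converts into the H\"older bound, and the far range is disposed of by the boundedness of $\W$. The paper phrases the same computation via the monotone truncation of the $\rho$-integral at $\bar{\rho}=e^{M_0^2}$ and the threshold $\delta(\l)=\min\{r_0^2,\l/\sqrt{1+M_0^4}\}$, which coincides with your $r_1^2$, so there is no substantive difference.
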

\begin{proof}
Assume there exist $M_0,r_0,\theta>0$ as in Definition \ref{D4.8}. Then, there exists $\bar{\rho}>1$ ($\bar{\rho}=\exp{(M_0^2)}$) such that
$$m_\l(\bar{\rho},\tau)\geq\theta\left|B\left(x_0,\sqrt{\log{(\bar{\rho})}(t_0-\tau)}\right)\right|$$
for every $0<t_0-\tau\leq\delta(\l)=\min{\left\{r_0^2,\frac{\l}{\sqrt{1+M_0^4}}\right\}}$. By using the doubling property, if $\d^2(z_0,z)\leq\delta(\l)$ we have
\begin{eqnarray*}
M_\l(z_0,z)&\geq&\int_{\d^2(z_0,z)}^{\delta(\l)}{\frac{1}{\left|B\left(x_0,\sqrt{\eta}\right)\right|}\left(\int_{\bar{\rho}}^{+\infty}{m_\l\left(\rho,t_0-\eta\right)\frac{d\rho}{\rho^{1+b}}}\right)\frac{d\eta}{\eta}}\\
&\geq&\int_{\d^2(z_0,z)}^{\delta(\l)}{\frac{m_\l\left(\bar{\rho},t_0-\eta\right)}{\left|B\left(x_0,\sqrt{\eta}\right)\right|}\left(\int_{\bar{\rho}}^{+\infty}{\frac{d\rho}{\rho^{1+b}}}\right)\frac{d\eta}{\eta}}\\
&\geq&c\int_{\d^2(z_0,z)}^{\delta(\l)}{\frac{d\eta}{\eta}}=c\left(\log{\frac{1}{\d^2(z_0,z)}}-\log{\frac{1}{\delta(\l)}}\right).
\end{eqnarray*}
Inserting this inequality in \eqref{zibdue}, we immediately obtain the assertion in the case $\d^2(z_0,z)\leq\delta(\l)$. The remaining case follows just from the boundedness of $\W$.
\end{proof}

\begin{corollary}\label{ncor}
Assume the exterior $d$-cone condition holds at $z_0$. Then, for every $\phi\in C(\de\O,\R)$, we  have
$$ |H_\phi^\Omega(z)-\phi(z_0)|\leq \tilde{\phi}\left(z_0,C\left(\d(z_0,z)\right)^\alpha\right)\qquad\forall z\in\Omega,$$
where $C,\alpha$ are the constants in Theorem \ref{cconduno}.
\end{corollary}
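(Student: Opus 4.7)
The plan is to combine two ingredients that are already in place. The first is the general estimate \eqref{5.9} recalled from \cite{LU}, which asserts that for every continuous boundary datum $\phi$ one has
$$|H_\phi^\Omega(z)-\phi(z_0)|\le \tilde{\phi}(z_0,\W(z))\qquad\forall z\in\O,$$
where $\tilde{\phi}(z_0,\cdot)$ depends only on $\phi$ and is monotone non-decreasing in its second argument. The second is Theorem \ref{cconduno}, just proved, which under the exterior $d$-cone condition at $z_0$ gives the H\"older bound
$$\W(z)\le C(\d(z_0,z))^\alpha\qquad\forall z\in S,$$
with $C,\alpha>0$ depending only on $|\H|$ and on the cone parameters $M_0,r_0,\theta$ (in particular, independent of $\phi$).

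The proof then amounts to the following. First, I would fix a parameter $\rho\in]0,1[$ small enough that both \eqref{5.9} and Theorem \ref{cconduno} apply to $\W=\W_\rho$ (both statements hold uniformly for all sufficiently small $\rho$, so this compatibility is automatic). Second, I would plug the H\"older upper bound for $\W(z)$ into \eqref{5.9}. Using the monotonicity of $\tilde{\phi}(z_0,\cdot)$ in its second argument, this substitution immediately produces
$$|H_\phi^\Omega(z)-\phi(z_0)|\le \tilde{\phi}\bigl(z_0,\,C(\d(z_0,z))^\alpha\bigr)\qquad\forall z\in\O,$$
which is exactly the asserted estimate, with the same $C,\alpha$ as in Theorem \ref{cconduno}.

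There is no genuine obstacle in this step: all the analytical difficulty was taken care of upstream, notably in Theorem \ref{cconduno}, whose proof used Theorem \ref{intnotcap} together with the key geometric consequence of the $d$-cone condition, namely $m_\l(\bar\rho,\tau)\gtrsim |B(x_0,\sqrt{\log(\bar\rho)(t_0-\tau)})|$ for some large $\bar\rho>1$, which produced a logarithmic lower bound for $M_\l(z_0,z)$ and hence the H\"older upper bound for $\W_\rho$. The only minor bookkeeping is the choice of $\rho$, handled as above. In fact, the very same argument, applied to a boundary datum $\phi$ with $[\phi]_{z_0,\delta}<\infty$ and the explicit dependence of $\tilde{\phi}(z_0,s)$ on $\phi$ (as recorded in \cite[Theorem 5.2 and Remark 5.3]{LU}), recovers the quantitative H\"older-at-the-boundary statement \eqref{hphiquad} of Theorem \ref{conooo}.
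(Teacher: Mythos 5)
Your proposal is correct and follows exactly the paper's own argument: the corollary is obtained by inserting the H\"older bound for $\W$ from Theorem \ref{cconduno} into the estimate \eqref{5.9} and using the monotonicity of $\tilde{\phi}(z_0,\cdot)$ in its second variable. The extra remark about choosing $\rho$ small enough is harmless bookkeeping and does not change the argument.
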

\begin{proof}
The assertion follows by the last theorem and \eqref{5.9}, by keeping in mind the monotonicity of $\tilde{\phi}$ in the second variable.
\end{proof}

We close the paper by completing the proof of Theorem \ref{conooo} and by giving an application to cylindrical domains.
\begin{proof}[Proof of Theorem \ref{conooo}]
We can deduce it by putting together Corollary \ref{ncor} and \cite[Proposition 5.7]{LU}. We explicitly remark that in \cite[Proposition 5.7]{LU} it was supposed the validity of a reverse-doubling property for $d$. This holds true by using the properties (\hyperref[diuno]{D1})--(\hyperref[ditre]{D3}) and by arguing as in \cite[Proposition 2.9 - Lemma 2.11]{DGL} (see also \cite{FGW}).
\end{proof}

\begin{corollary}
Let $\O=D\times ]t_1,t_2[$ be a cylindrical domain, with $\overline{\O}\subset S$. Assume $D$ satisfies the following condition at some point $x_0\in\de D$
\begin{enumerate}
  \item[\,]there exist $r_0,\theta>0$ such that $\quad|B(x_0,r)\smallsetminus D|\ge\theta|B(x_0,r)|\,\,$ for every $0<r\le r_0$.
  \end{enumerate}
Then, at every point $(x_0,t_0)$ with $t_1\leq t_0\leq t_2$, the conclusions of Theorem \ref{conooo} hold true.
\end{corollary}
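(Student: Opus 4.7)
The plan is to reduce the statement to Theorem \ref{conooo} by verifying that the cylindrical domain $\Omega=D\times\,]t_1,t_2[$ satisfies the exterior $d$-cone condition of Definition \ref{D4.8} at every boundary point of the form $(x_0,t_0)$ with $t_1\le t_0\le t_2$. Once this is checked, the H\"older estimate \eqref{hphiquad} follows at once, with the cone parameters expressed in terms of the constants $r_0,\theta$ coming from the spatial density hypothesis on $D$ and with the choice $M_0=1$.

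The key observation is that, thanks to the product structure of $\Omega$, the negation ``$(x,t_0-r^2)\notin\Omega$'' decouples: either the time level $t_0-r^2$ escapes the slab $]t_1,t_2[$, in which case nothing is required of $x$, or it stays inside the slab, and then being outside $\Omega$ is equivalent to being outside $D$. Concretely, I would take $M_0=1$ and keep $r_0,\theta$ from the hypothesis. For $0<r\le r_0$, set
$$A_r:=\{x\in\overline{B(x_0,r)}\,:\,(x,t_0-r^2)\notin\Omega\}.$$
If $t_0-r^2\le t_1$, then $(x,t_0-r^2)\notin\Omega$ for every $x\in\R^N$, so $A_r=\overline{B(x_0,r)}$ and \eqref{4.4} holds trivially. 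If instead $t_0-r^2>t_1$, then automatically $t_0-r^2<t_2$ (because $r>0$ and $t_0\le t_2$), hence $(x,t_0-r^2)\in\Omega$ if and only if $x\in D$; thus $A_r=\overline{B(x_0,r)}\setminus D$, whose Lebesgue measure is at least $\theta\,|B(x_0,r)|$ by the assumption on $D$.

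In both cases the density inequality \eqref{4.4} is satisfied with $M_0=1$ and with the given $r_0,\theta$, so $\Omega$ fulfills the exterior $d$-cone condition at $(x_0,t_0)$. Applying Theorem \ref{conooo} yields the claimed H\"older estimate. There is no substantive obstacle here; the proof is a clean dichotomy on the sign of $t_0-r^2-t_1$, and the only point that needs to be verified explicitly is the strict inequality $t_0-r^2<t_2$ in the second case, which is immediate from $r>0$ and $t_0\le t_2$.
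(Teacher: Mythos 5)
Your proposal is correct and takes essentially the same route as the paper: reduce the corollary to Theorem \ref{conooo} by checking the exterior $d$-cone condition \eqref{4.4} at $(x_0,t_0)$. The only difference is that the paper disposes of that verification by citing \cite[Proposition 6.1]{LU}, whereas you carry out the (straightforward and correct) dichotomy on $t_0-r^2\lessgtr t_1$ directly, with $M_0=1$ and the given $r_0,\theta$.
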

\begin{proof}
We have just to recall \cite[Proposition 6.1]{LU} which provides the validity of the $d$-cone condition at such $z_0$.
\end{proof}

\bibliographystyle{amsplain}

\end{document}